\theoremstyle{definition}
\newtheorem{Definition}{Definition}[section]
\newtheorem{pr}{Problem}
\theoremstyle{plain}
\newtheorem{Theorem}{Theorem}[section]
\newtheorem{Lemma}[Theorem]{Lemma}
\newtheorem{Remark}[Theorem]{Remark}
\newcommand{\R}{\mathbb{R}}
\newcommand{\Z}{\mathbb{Z}}
\newcommand{\V}{\mathbb{V}}
\newcommand{\W}{\mathbb{W}}
\newcommand{\N}{\mathbb{N}}
\newcommand{\F}{\mathcal{F}_{*}}
\newcommand{\I}{\mathbb{I}}
\newcommand{\Id}{\operatorname{Id}}
\newcommand{\SO}{\operatorname{SO}}
\renewcommand{\H}{\mathcal{H}}
\newcommand{\sub}{\overline{\operatorname{sub}}}
\newcommand{\im}{\operatorname{im}}
\newcommand{\A}{\mathcal{A}}
\newcommand{\z}{\mathcal{Z}}
\newcommand{\K}{\mathcal{K}}
\begin{document}

\title[Symmetry breaking of solutions of non-cooperative elliptic systems]{Symmetry breaking of solutions \\ of non-cooperative elliptic systems}

\author{Piotr Stefaniak}
\address{Faculty of Mathematics and Computer Science\\
Nicolaus Copernicus University \\
PL-87-100 Toru\'{n} \\ ul. Chopina $12 \slash 18$ \\
Poland}

\email{cstefan@mat.umk.pl}
\date{\today}

\keywords{symmetry breaking, non-cooperative elliptic system, equivariant degree}
\subjclass[2010]{Primary: 35J57; Secondary: 35B06.}
\thanks{Partially supported by the National Science Centre,  Poland,  under grant    DEC-2012/05/B/ST1/02165}

\begin{abstract}
In this article we study the symmetry breaking phenomenon of solutions of non-cooperative elliptic systems. We apply the degree for $G$-invariant strongly indefinite functionals to obtain simultaneously a symmetry breaking and a global bifurcation phenomenon.
\end{abstract}

\maketitle
\section{Introduction}

In this paper, we consider a symmetry breaking of solutions of non-cooperative elliptic systems of the form:
\begin{equation}\label{problem1}
\left\{ \begin{array}{rcl}
-\Delta w_1 = \nabla_{w_1} F(w_1,w_2)+f_1& \text{in}&\Omega\\
\Delta w_2=\nabla_{w_2} F(w_1,w_2)+f_2& \text{in}&\Omega\\
\frac{\partial w_1}{\partial \nu}=\frac{\partial w_2}{\partial \nu}=0 & \text{on}& \partial \Omega,
\end{array}\right.
\end{equation}
where $\R^n$ is an orthogonal representation of a compact Lie group $G$, $\Omega\subset\R^n$ is an open, bounded, $G$-invariant set with a smooth boundary and $F\in C^2(\R^2,\R)$. That is we discuss the existence of a $G$-symmetric function $(f_1,f_2)$ such that there is a $K$-symmetric solution $(w_1,w_2)$ of system \eqref{problem1}, where $K$ is a closed subgroup of $G$. If such a solution exists, we say that occurs a symmetry breaking of solutions of problem \eqref{problem1}.

The problem of symmetry breaking has been studied by many authors under various assumptions on $F$ and $\Omega$, see for instance \cite{Budd}-\cite{Dancer1}, \cite{Srikanth1, Jager, Lauterbach}, \cite{Srikanth}-\cite{Srikanth2}, \cite{Srikanth4}. Of course this list is far from being complete. The authors have used different tools to obtain their results: Rybakowski's homotopy index, the equivariant Conley index or the Leray-Schauder degree. We have applied the degree for  $G$-invariant strongly indefinite functionals, see \cite{degree}, to obtain our results. Using this degree we have formulated conditions on $F$ which enable us to decide whether there is a connected set of solutions of the main problem. 

The idea of the proof of our main result is to reduce the problem to a bifurcation one.  We follow the idea from \cite{Dancer}, due to Dancer. The author has used a different tool, that is Rybakowski's homotopy index, see \cite{Rybakwoski}, which cannot be used to prove our results, because the functional corresponding to system \eqref{problem1} is strongly indefinite.  Moreover, using Rybakowski's or Conley indices it is only possible to obtain a sequence of solutions of the symmetry breaking problem. Using the degree for  $G$-invariant strongly indefinite functionals we have obtained a global bifurcation of solutions that problem. Moreover, our method can be used to handle a number of related problems.

After this introduction our article is organised as follows.

In section 2 we introduce our notation and reduce the symmetry breaking problem to a bifurcation problem.

In section 3 we consider a system of elliptic equations and recall basic properties of the operator induced by this system. We formulate the symmetry breaking and the corresponding bifurcation problem for this system. We calculate the degree for  $N(K)$-invariant strongly indefinite functionals for an operator associated with a linear system of equations, where $N(K)$ is the normalizer of a subgroup $K$ of $G$. We use this results to proceed some computations in a nonlinear case. 

In section 4 we formulate and prove the main results of this article. To do it we use the abstract results from the previous sections. 

In section 5 we illustrate our method.

To make this article self-contained, we have included in section 6 the definition of the Euler ring $U(G)$ of a compact Lie group $G$ and the definition and basic properties of the degree for  $G$-invariant strongly indefinite functionals, due to Go{\l}\c{e}biewska and Rybicki, see \cite{degree}.

\section{Preliminaries}

 Throughout this article $G$ stands for a compact Lie group and $\sub(G)$ for the set of closed subgroups of $G$. Let $(\H,\langle \cdot,\cdot\rangle)$  be a separable Hilbert space, which is an orthogonal representation of $G$ and let
 $\H^K = \{x\in \H: \forall_{g\in K}\ gx=x\}$ be the set of all fixed points of the action of a subgroup $K\in\sub(G)$.
 The set $N(K)$ is the normalizer of a subgroup $K\in\sub(G)$, i.e. $N(K)=\{g\in G: gK=Kg\}$.
 Fix $k\in\N$. Let $C^k_G(\H,\R)$ denote the set of all $G$-invariant functionals of class $C^k$, i.~e. $\Psi (g x)=\Psi (x)$, where $\Psi \in C^k_G(\H,\R),\ g\in G$ $x\in\H$, and $C^{k-1}_G(\H,\H)$ the set of all $G$-equivariant operators of class $C^{k-1}$, i.~e. $T(gx)=gT(x)$, where $T \in C^{k-1}_G(\H,\H),\ g\in G$, $x\in \H$.
 It can be easily shown that for a fixed $K\in\sub(G)$, $\H^G\subset\H^K$ and if $\Psi \in C^k_G(\H,\R)$, then the gradient $\nabla\Psi \in C^{k-1}_G(\H,\H)$, $k\in\N$.
 We denote by $B_{\gamma}(\H,p)$ the open unit ball in $\H$ centered at a point $p$ of radius $\gamma$. Moreover, we  put $B(\H, p)=B_{1}(\H,p)$, $B_{\gamma}(\H)=B_{\gamma}(\H,0)$ and $B(\H)=B_{1}(\H,0)$.
 Suppose that $\Lambda$ is a linear space of parameters, $\Psi \in C^k_G(\H\times\Lambda,\R)$ is such that  $\nabla_u \Psi(0,\lambda)=0$ for every $\lambda\in \Lambda$. Consider the equation
 \begin{equation}\label{bifogol}
 \nabla_u \Psi(u,\lambda)=0.
 \end{equation}
 Define a set of non-zero solutions of \eqref{bifogol} by
 $\mathcal{N}=\{(u,\lambda)\in (\H\setminus\{0\})\times\Lambda: \nabla_u \Psi(u,\lambda)=0\}$,
 fix $\lambda_0\in\Lambda$ and denote by $C(\lambda_0)$ a connected component of the closure $\operatorname{cl}(\mathcal{N})$ such that $(0,\lambda_0)\in C(\lambda_0)$.

 \begin{Definition}
 A point $(0,\lambda_0)\in \{0\}\times\Lambda$  is said to be a local bifurcation point of solutions
of equation \eqref{bifogol}, if $(0, ƒ\lambda_0)\in \operatorname{cl}(\mathcal{N})$.
A point $(0, ƒ\lambda_0)\in \{0\}\times\Lambda$ is said to be a branching point of non-zero solutions of equation \eqref{bifogol}, if
$C(\lambda_0) \neq \{(0, \lambda_0)\}$.
A point $(0, ƒ\lambda_0)\in \{0\}\times\Lambda$ is said to be a global bifurcation point of non-zero solutions of equation  \eqref{bifogol}, if either $C(\lambda_0) \cap ({0}\times(\Lambda \setminus \{\lambda_0\}) \neq \emptyset$ or $C(\lambda_0)$ is not bounded.
 \end{Definition}

\begin{pr}\label{pr1}
Let $T\in C^0_G(\H,\H)$. Does there exist $w\in \H^K\backslash \H^G$ such that $T(w)\in \H^G$?
\end{pr}

For subspaces $\H_2\subset \H_1\subset\H$ set $\H_1\ominus\H_2=\{u\in\H_1:\langle u,v \rangle=0\ \forall_{v\in \H_2}\}$.

Consider a $G$-equivariant projection $\pi\colon \H\to \H$ such that $\im \pi = (\H^G)^{\bot}$. Then
 $\im(I-\pi)=\H^G$ and note that $\pi(\H^K)\subset \H^K$ for every $K\in\sub(G)$.
Define  $\pi_1\colon \H^K\to \H$ to be the composition $\pi_1 =  \pi\circ i$, where
$i\colon \H^K\to (\H\ominus\H^K)\oplus\H^K$ is the embedding given by $i(x)=(0,x)$. The mapping $i$ is $N(K)$-equivariant (the space $\H^K$ is $N(K)$-invariant and does not have to be $G$-invariant), so $\pi_1$ is also  $N(K)$-equivariant. It is easy to verify that $\im\pi_1=\H^K\ominus\H^G$. Let $\H^K=\im \pi_1\oplus\Lambda$, where $\Lambda=\H^G$.

In \cite{Dancer} it has been shown that Problem \ref{pr1} is equivalent to the following
\begin{pr}\label{pr2}
Let $T\in C^0_G(\H,\H)$. Do there exist $\lambda \in \Lambda$ and $u\in \im \pi_1 \backslash\{0\}$ satisfying the equation
$
(\pi_1\circ T\circ i)(u,\lambda)=0?
$
\end{pr}

Define the operator $\A\in C^0_{N(K)}(\im \pi_1 \oplus \Lambda, \im \pi_1)$ by $\A(u,\lambda)=\pi_1(T(i(u,\lambda)))$.
It is easy to verify that the operator $\A$ is well defined.

The following remark follows from the definition of $\pi_1$ and the equality $\A(0,\lambda)=0$ for every $\lambda \in \Lambda$.

\begin{Remark}\label{symbrAbif}
If there exists a bifurcation point of solutions of the equation
$\A(u,\lambda)=0$, then the answer to Problem \ref{pr1} is affirmative.
\end{Remark}

In view of remark \ref{symbrAbif} our aim is to study the bifurcations of solutions of the equation $\A(u,\lambda)=0$.

Throughout the rest of this section we will need the following assumptions:
\begin{enumerate}
\item $\Phi\in C^2_G(\H,\R)$,
\item $\Phi(w) =\frac{1}{2}\langle L w, w \rangle-\eta(w)$,
\item $L\colon \H \to \H$ is a linear, bounded, self-adjoint, G-equivariant Fredholm operator of index~0,
\item $\nabla \eta\in C^1_G(\H,\H)$ is a completely continuous operator.

\end{enumerate}

From now on we put $T=\nabla\Phi$. Because the operator $L$ is $G$-equivariant, $L(\H^G)\subset \H^G$, $L(\H^K)\subset \H^K$. Since the operator L is self-adjoint, we obtain the following
\[\left.\begin{array}{cccc}
 &\H\ominus\H^K& & \H\ominus\H^K \\
 & \oplus& & \oplus\\
L\colon &\H^K\ominus\H^G&\to & \H^K\ominus\H^G, \\
 & \oplus& & \oplus\\
 &\H^G& & \H^G
\end{array}\right.
\ L=
\left[ \begin{array}{ccc}
 L_1&0&0 \\
 0&L_2&0 \\
  0&0&L_3
 \end{array} \right].
\]
From the above we get $\pi_1(L(i(u,\lambda)))=\pi_1(L(0,u,\lambda))=\pi_1(0,L_2u,L_3\lambda)=L_2u.$ Therefore $\A(u,\lambda)=\pi_1(\nabla_u\Phi(u,\lambda))=L_2u-\pi_1(\nabla\eta(i(u,\lambda))).$

\begin{Lemma}\label{Lemma1}
For every $\lambda \in \Lambda$ the operator $\A(\cdot,\lambda)\in C^1_{N(K)}(\im \pi_1, \im \pi_1)$ is gradient.
\end{Lemma}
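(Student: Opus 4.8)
The plan is to exhibit an explicit potential for $\A(\cdot,\lambda)$, namely the restriction of $\Phi$ to the affine slice $\lambda+\im\pi_1$ of $\H^K$. Concretely, for a fixed $\lambda\in\Lambda=\H^G$ I would define $\psi_\lambda\colon\im\pi_1\to\R$ by $\psi_\lambda(u)=\Phi(i(u,\lambda))=\Phi(u+\lambda)$, using that $\im\pi_1\oplus\Lambda=\H^K$ so that $u+\lambda\in\H^K\subset\H$. Since $\Phi\in C^2_G(\H,\R)$, the functional $\psi_\lambda$ is of class $C^2$ on $\im\pi_1$; moreover, for $g\in N(K)$ one has $g\lambda=\lambda$ (as $\lambda\in\H^G$), hence $gu+\lambda=g(u+\lambda)$, and $G$-invariance of $\Phi$ gives $\psi_\lambda(gu)=\psi_\lambda(u)$, so $\psi_\lambda$ is $N(K)$-invariant. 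It then remains to verify that $\nabla\psi_\lambda=\A(\cdot,\lambda)$.

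The crucial observation is that $\pi_1$ restricted to $\H^K$ coincides with the orthogonal projection of $\H^K$ onto $\im\pi_1$. Indeed, $\im\pi_1=\H^K\ominus\H^G\subset(\H^G)^{\bot}=\im\pi$, so writing any $x\in\H^K$ as $x=u+v$ with $u\in\im\pi_1$ and $v\in\H^G$, the equivariant orthogonal projection $\pi$ fixes $u$ and annihilates $v$; thus $\pi_1(x)=\pi(i(x))=u$. In particular, since $u+\lambda\in\H^K$ and $\nabla\Phi=L-\nabla\eta$ preserves $\H^K$ (because both $L$ and $\nabla\eta$ map $\H^K$ into itself), the expression $\A(u,\lambda)=\pi_1(\nabla\Phi(u+\lambda))$ is exactly the orthogonal projection of $\nabla\Phi(u+\lambda)$ onto $\im\pi_1$.

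With this in hand the gradient computation is immediate. For $h\in\im\pi_1$ the chain rule yields
\[
D\psi_\lambda(u)[h]=D\Phi(u+\lambda)[h]=\langle\nabla\Phi(u+\lambda),h\rangle=\langle\pi_1(\nabla\Phi(u+\lambda)),h\rangle,
\]
where the last equality holds since the difference $\nabla\Phi(u+\lambda)-\pi_1(\nabla\Phi(u+\lambda))$ lies in $\H^G$ and is therefore orthogonal to $h\in\im\pi_1=\H^K\ominus\H^G$. Hence $\nabla\psi_\lambda(u)=\pi_1(\nabla\Phi(u+\lambda))=\A(u,\lambda)$, which shows that $\A(\cdot,\lambda)$ is the gradient of the $N(K)$-invariant functional $\psi_\lambda\in C^2_{N(K)}(\im\pi_1,\R)$, and in particular that $\A(\cdot,\lambda)\in C^1_{N(K)}(\im\pi_1,\im\pi_1)$ is gradient.

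The only genuinely delicate point is the identification of $\pi_1|_{\H^K}$ with the orthogonal projection onto $\im\pi_1$, together with the careful bookkeeping of the affine shift by $\lambda\in\H^G$; once these are settled the statement reduces to the elementary fact that differentiating the restriction of a functional to a closed subspace produces the projected gradient. I would also make sure to record explicitly that $\nabla\Phi$ preserves $\H^K$, since this is what guarantees both that $\A$ takes values in $\im\pi_1$ and that the potential $\psi_\lambda$ is well defined.
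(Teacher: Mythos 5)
Your argument is correct and complete: identifying $\pi_1|_{\H^K}$ with the orthogonal projection of $\H^K$ onto $\im\pi_1=\H^K\ominus\H^G$ and exhibiting $\psi_\lambda(u)=\Phi(i(u,\lambda))$ as an $N(K)$-invariant potential is exactly the standard slice-restriction argument. The paper omits the proof and refers to \cite{Dancer}, where the same reasoning is used, so your proposal matches the intended proof and needs no changes.
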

We refer the reader to \cite{Dancer} for the proof of the above lemma.

From the above lemma it follows that the equation $\A(u,\lambda)=0$ has a variational and symmetric structure. To study bifurcations of solutions of the equation can be used the degree for  $N(K)$-invariant strongly indefinite functionals $\nabla_{N(K)}$-$\deg(\cdot,\cdot)$, which is an element of the Euler ring $U(N(K))$ of a compact Lie group $N(K)$, see Appendix for the definitions and basic properties.

\section{Elliptic system}
In this section we study the strongly indefinite functional associated with a system of elliptic equations.

Consider the following system
\begin{equation}\label{rowNeuSN}
\left\{ \begin{array}{rcl}
-\Delta w_1 = \nabla_{w_1} F(w_1,w_2)& \text{in}&\Omega\\
\Delta w_2=\nabla_{w_2} F(w_1,w_2)& \text{in}&\Omega\\
\frac{\partial w_1}{\partial \nu}=\frac{\partial w_2}{\partial \nu}=0 & \text{on}& \partial \Omega,
\end{array}\right.
\end{equation}
where
\begin{enumerate}
\item $\Omega$ is an open, bounded and $G$-invariant subset of an orthogonal $G$-representation $\R^n$, with a smooth boundary,
\item $F\in C^2(\R^2,\R)$,
\item  $|\nabla^2 F(y)|\leq a +b|y|^q$, where $a,\ b\in\R,\ q<\frac{4}{n-2}$ for $n\geq 3$ and $q<\infty$ for $n=2$.
\end{enumerate}

Put in the previous section $\H=H^1(\Omega)\oplus H^1(\Omega)$.
Since $H^1(\Omega)$ is an orthogonal $G$-representation with the action given by $(g,u)(x)\mapsto u(g^{-1}x)$ for  $g\in G,\ u\in H^1(\Omega),\ x\in\Omega$, so is $\H$, where $G$ acts on this space by $(g,(u,v))(x)\mapsto (u(g^{-1}x),v(g^{-1}x))$ for  $g\in G,\ u,\ v\in H^1(\Omega),\ x\in\Omega$.
Put  $L=
\left[\begin{array}{cc}
1 &0\\
0& -1
\end{array}\right]$. For brevity we use the same notation for a matrix and the operator $H^1(\Omega)\oplus H^1(\Omega)\to H^1(\Omega)\oplus H^1(\Omega)$ induced by the matrix.

Recall that a weak solution of the system is a function $w \in \H$  such that
\begin{equation*}
\forall_{v\in \H}~\int\limits_{\Omega} \langle L\nabla w(x), \nabla v(x)\rangle - \langle \nabla F(w(x)),v(x) \rangle dx= 0,
\end{equation*}
where $\langle\cdot,\cdot\rangle$ are the standard inner products in $\R^{2n}$ and $\R^2$.

Put in the previous section 
\begin{equation}\label{Phi}
\Phi(w)= \frac{1}{2} \int\limits_{\Omega}|\nabla w_1(x)|^2 - |\nabla w_2(x)|^2dx -\int\limits_{\Omega} F(w(x))dx
\end{equation}
\[= \frac{1}{2} \int\limits_{\Omega}|\nabla w_1(x)|^2 - |\nabla w_2(x)|^2 +|w_1(x)|^2 - |w_2(x)|^2dx+\]
\[- \int\limits_{\Omega}\frac{1}{2}|w_1(x)|^2 - \frac{1}{2}|w_2(x)|^2 + F(w(x))dx=\]
\[= \frac{1}{2} \int\limits_{\Omega}\langle  \nabla (Lw(x)),\nabla w(x)\rangle +\langle L w(x), w(x)\rangle dx-\eta(w)=
\frac{1}{2}\langle Lw,w \rangle_{\H}-\eta(w),\]
where
\[
\eta(w)=- \int\limits_{\Omega}\frac{1}{2}|w_1(x)|^2 - \frac{1}{2}|w_2(x)|^2 + F(w(x))dx
\]
and therefore
\begin{equation}\label{eta}
  \langle \nabla\eta(w),v \rangle_{\H}= \int\limits_{\Omega}\langle L w(x),v(x)\rangle + \langle\nabla F(w(x)),v(x)\rangle dx.
\end{equation}

Then
$\Phi(w)=
\frac{1}{2}\langle Lw,w \rangle_{\H}-\eta(w)$,  $\nabla \Phi (w) = L w -\nabla\eta(w)$ and $\nabla \eta$ is a completely continuous operator (and consequently compact).
Moreover, a function $w\in \H$ is a weak solution of system \eqref{rowNeuSN} if and only if $\nabla \Phi(w)=0$, that is  $w$ is a critical point of $\Phi$.

We study breaking of symmetries of critical orbits for the functional $\Phi$. To do this, fix $K\in \sub(G)$ and recall that we have defined an equivariant orthogonal projection
 $\pi_1 \colon \im \pi_1 \oplus \Lambda \to \im \pi_1$,
where $\im \pi_1= \H^K \ominus \H^{G}$ and $\Lambda=\H^{G}$.  We have also defined the operator  $\A\in C^1_{N(K)}(\im \pi_1
\oplus \Lambda, \im \pi_1)$ by $\A(u,\lambda)=\pi_1(\nabla\Phi(i(u,\lambda)))$, that is
\[
\A(u,\lambda)= \pi_1(\nabla
\Phi(i(u,\lambda)))= L_2 u - \pi_1(\nabla\eta(i(u,\lambda))),
\]
 where $i$ is an embedding  $\H^K$ in $(\H^K)^{\bot}\oplus\H^K$ defined by  $i(x)=(0,x)$. From lemma \ref{Lemma1}
it follows that the operator $\A(\cdot,\lambda)\in C^1_{N(K)}(\im \pi_1, \im
\pi_1)$ is gradient for every $\lambda\in \Lambda$.

 Denote by $\sigma(-\Delta,\Omega)=\{0=\mu_1<\mu_2<\ldots\}$ the set of eigenvalues of the elliptic equation on $\Omega$ with the Neumann boundary condition and $\V_{-\Delta}(\mu_k)$ the eigenspace associated with $\mu_k\in\sigma(-\Delta;\Omega)$. We also use the following notation
\begin{enumerate}
\item $\H^0=\{0\}$,
\item $\H_k=\V_{-\Delta}(\mu_k) \oplus \V_{-\Delta}(\mu_k)$ for $k\in\N$,
\item $\H^n=\bigoplus\limits_{k=1}^n\H_k$ for $n\in\N$.
\end{enumerate}

Fix $\lambda\in\Lambda=\H^G$. We will calculate the degree $\nabla_{N(K)}\text{-}\deg(\A(\cdot,\lambda), B(\im\pi_1))$, which is an element of the Euler ring $U(N(K))$. To do this we need to define an approximation scheme for the mapping $\A(\cdot,\lambda)$, see Appendix.
Consider the sequence of $N(K)$-equivariant orthogonal projections $\Gamma=\{\tau_n\colon\H\to\H:n\in\N_0\}$ defined as follows

\begin{enumerate}
\item $\H'^0=\{0\}$,
\item $\H'_k=\left(\V_{-\Delta}(\mu_k)^K \ominus\V_{-\Delta}(\mu_k)^G\right)\oplus \left(\V_{-\Delta}(\mu_k)^K\ominus\V_{-\Delta}(\mu_k)^G \right)$ for $k\in\N$,
\item $\H'^n=\bigoplus\limits_{k=1}^n \H'_k$ for $n\in\N$,
\item $\tau_n$ is a projection such that $\im\tau_n=\H'^n$, for $n\in\N$.
\end{enumerate}
Then $\Gamma$ is an  $N(K)$-equivariant approximation scheme on $\im\pi_1=\H^K\ominus\H^G$. Moreover,  $\ker L= \H^0$ and for every $n\in\N \cup \{0\}$ it follows that $\tau_n\circ L=L\circ \tau_n$. Note that $\pi_1(\H^n)=\H'^n$.

Consider the system:
\begin{equation}\label{lin}
\left\{ \begin{array}{rcl}
-\Delta w_1 =aw_1+bw_2& \text{in}&\Omega\\
\Delta w_2 = bw_1+cw_2& \text{in}&\Omega\\
\frac{\partial w_1}{\partial \nu}=\frac{\partial w_2}{\partial \nu}=0 & \text{on}& \partial \Omega
\end{array}\right.
\end{equation}
and put $A=
\left[\begin{array}{cc}
a &b\\
b& c
\end{array}\right]$.
Then
\[\Phi(w)= \frac{1}{2} \int\limits_{\Omega}|\nabla w_1(x)|^2 - |\nabla w_2(x)|^2 - \langle A(w(x)),w(x)\rangle dx.\]
Note that from \eqref{Phi} and \eqref{eta} it follows that $\nabla\Phi(w)=Lw-C_Aw$, where $C_A$ is given by
$\langle C_Aw,v\rangle_{\H}= \int\limits_{\Omega} \langle (L+A)w(x), v(x)\rangle  dx$ for $w,v \in \H$.

\begin{Lemma}
For every $w\in\H_k,\ v\in\H$, $\langle C_Aw, v\rangle_{\H}=\langle\frac{1}{1+\mu_k}(L+A)w,v\rangle_{\H}$.
\end{Lemma}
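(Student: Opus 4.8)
The plan is to exploit the fact that, for $w\in\H_k$, the two components of $w$ are Neumann eigenfunctions of $-\Delta$ for the eigenvalue $\mu_k$, so that on $\H_k$ the full $\H$-inner product reduces to a multiple of the $L^2$-pairing, the multiplier being exactly $1+\mu_k$.

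First I would observe that $L+A$ is a constant $2\times 2$ matrix, so writing $w=(w_1,w_2)$ with $w_1,w_2\in\V_{-\Delta}(\mu_k)$, the function $z:=(L+A)w$ again has both components in $\V_{-\Delta}(\mu_k)$: each component of $z$ is a linear combination of $w_1$ and $w_2$, and $\V_{-\Delta}(\mu_k)$ is a linear subspace of $H^1(\Omega)$. Hence $z\in\H_k$, and each $z_j$ satisfies the weak Neumann eigenvalue identity $\int_\Omega\nabla z_j\cdot\nabla v_j\,dx=\mu_k\int_\Omega z_j v_j\,dx$ for every $v_j\in H^1(\Omega)$.

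Next, starting from the definition $\langle C_A w,v\rangle_\H=\int_\Omega\langle z(x),v(x)\rangle\,dx$, I would expand the full inner product $\langle z,v\rangle_\H$ as the sum over the two $H^1(\Omega)$ factors and apply the weak eigenvalue identity to each gradient term. This gives
\[
\langle z,v\rangle_\H=\int_\Omega\big(\nabla z_1\cdot\nabla v_1+z_1v_1+\nabla z_2\cdot\nabla v_2+z_2v_2\big)\,dx=(1+\mu_k)\int_\Omega\langle z(x),v(x)\rangle\,dx.
\]
Dividing by $1+\mu_k$, which is strictly positive since $\mu_k\geq 0$, yields $\langle\frac{1}{1+\mu_k}z,v\rangle_\H=\int_\Omega\langle z(x),v(x)\rangle\,dx=\langle C_A w,v\rangle_\H$, which is precisely the asserted equality.

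I do not expect a serious obstacle here; the only point requiring care is the justification that the integration by parts contributes no boundary term, which is exactly where the Neumann condition encoded in the definition of $\V_{-\Delta}(\mu_k)$ enters. By phrasing the argument through the weak formulation of the eigenvalue problem one avoids appealing to any regularity beyond $z_j\in\V_{-\Delta}(\mu_k)\subset H^1(\Omega)$ and $v_j\in H^1(\Omega)$, so the computation is valid for arbitrary test functions $v\in\H$.
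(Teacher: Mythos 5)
Your proposal is correct and follows essentially the same route as the paper: both reduce the claim to the observation that the components of $(L+A)w$ still lie in $\V_{-\Delta}(\mu_k)$, so the gradient part of the $\H$-inner product contributes a factor $\mu_k$ times the $L^2$-pairing, giving the overall multiplier $1+\mu_k$. Your use of the weak eigenvalue identity in place of the paper's explicit integration by parts is a minor (and slightly cleaner) variation, not a different argument.
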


\begin{proof}
Note that $L+A=\left[\begin{array}{cc}
1+a &b\\
b& -1+c
\end{array}\right]$ and
consider the formula:
\[
\langle(L+A)w,v\rangle_{\H}=\int\limits_{\Omega}\langle\nabla (L+A)w(x), \nabla v(x) \rangle dx +\int\limits_{\Omega}\langle(L+A)w(x),v(x)\rangle dx.
\]
Then
\[
\left.\begin{array}{l}
\int\limits_{\Omega}\langle\nabla (L+A)w(x), \nabla v(x) \rangle dx \\
=\int\limits_{\Omega}\nabla ((1+a)w_1(x)) \nabla v_1(x) +\nabla (bw_1(x)) \nabla v_1(x)dx\\
+\int\limits_{\Omega}\nabla (bw_2(x)) \nabla v_2(x)+\nabla ((-1+c)w_2(x)) \nabla v_2(x) dx \\
=\int\limits_{\Omega}(-\Delta) ((1+a)w_1(x)) v_1(x) +(-\Delta)(bw_1(x)) v_1(x)dx\\
+\int\limits_{\Omega}(-\Delta) (bw_2(x))  v_2(x)+(-\Delta) ((-1+c)w_2(x))  v_2(x) dx
\\
=\mu_k\int\limits_{\Omega}((1+a)w_1(x)) v_1(x) +(bw_1(x)) v_1(x)+ (bw_2(x))  v_2(x)+ ((-1+c)w_2(x))  v_2(x) dx
\\
=\mu_k\int\limits_{\Omega}\langle(L+A)w(x),v(x)\rangle dx.
\end{array}\right.
\]
Therefore
\[
\langle(L+A)w,v\rangle_{\H}=(1+\mu_k)\int\limits_{\Omega}\langle(L+A)w(x)v(x)\rangle dx=(1+\mu_k) \langle C_Aw,v\rangle_{\H}.
\]
Hence
$\langle C_Aw, v\rangle_{\H}=\langle\frac{1}{1+\mu_k}(L+A)w,v\rangle_{\H}.$
\end{proof}

From the above lemma we obtain $ C_A (\H_k)\subset \H_k$ and therefore $ C_A \colon \H_k\to \H_k$.  To describe the restriction of $\A$ to subrepresentations of $\im\pi_1$, we first describe the restriction of $\nabla \Phi$ to subrepresentations of $\H$.  Let
$
T_k(A)=
\left[\begin{array}{cc}
1-\frac{a}{1+\mu_k} &-\frac{b}{1+\mu_k}\\-\frac{b}{1+\mu_k}& -1-\frac{c}{1+\mu_k}
\end{array}\right]$
and $\alpha_{1,k},\ \alpha_{2,k}$ be the eigenvalues of the matrix $T_k(A)$, $f_{1,k},\ f_{2,k}$ the corresponding eigenvectors. Because the matrix $T_k(A)$ is symmetric, $\alpha_{1,k},\ \alpha_{2,k}\in\R$.
Denote by $\epsilon_1,\ \epsilon_2$ the standard base of $\R^2$. Then $\H_k=\{\varphi_1(x)\cdot \epsilon_1+\varphi_2(x)\cdot \epsilon_2: \varphi_i\in\V_{-\Delta}(\mu_k)\}.$
It is easy to check that
\[\{\varphi_1(x)\cdot \epsilon_1+\varphi_2(x)\cdot \epsilon_2: \varphi_i\in\V_{-\Delta}(\mu_k)\}=\{\varphi_1(x)\cdot f_{1,k}+\varphi_2(x)\cdot f_{2,k}: \varphi_i\in\V_{-\Delta}(\mu_k)\}.\]
Hence we obtain
$
(\nabla\Phi)_{|\H_k}=
\left[\begin{array}{cc}
\alpha_{1,k} \Id &0\\
0& \alpha_{2,k}\Id
\end{array}\right],$
where $\Id\colon \V_{-\Delta}(\mu_{k}) \to \V_{-\Delta}(\mu_{k})$ is the identity map.

Now we are able to describe the action of the restrictions of $\A(\cdot,\lambda)$ on the subrepresentations of $\im\pi_1$.
Fix $\lambda\in\H^G$ and assume that $\dim \V_{-\Delta}(\mu_k)^K \ominus\V_{-\Delta}(\mu_k)^G>0$. Since $\A(u,\lambda)= \pi_1(\nabla\Phi(i(u,\lambda)))$,
\[\A_{|\H'_k}(u,\lambda)=\pi_1(\nabla\Phi_{|\H_k}(i((u_1,\lambda_1),(u_2,\lambda_2))))=\pi_1(\nabla\Phi_{|\H_k}((0,u_1,\lambda_1),(0,u_2,\lambda_2)))\]
\[=\pi_1(\alpha_{1,k} \Id(0,u_1,\lambda_1),  (\alpha_{2,k}\Id(0,u_2,\lambda_2)))=(\alpha_{1,k} u_1,  \alpha_{2,k}u_2),\]
where $(u,\lambda)= ((u_1,\lambda_1),(u_2,\lambda_2))$ and
$(u_i,\lambda_i)\in\left((\V_{-\Delta}(\mu_k)^K \ominus\V_{-\Delta}(\mu_k)^G)\oplus\V_{-\Delta}(\mu_k)^G\right)$ for $i=1,2$.
Therefore
$
(\A_{|\H'_k}(\cdot,\lambda))=
\left[\begin{array}{cc}
\alpha_{1,k} \Id &0\\
0& \alpha_{2,k}\Id
\end{array}\right],$
where $\Id\colon (\V_{-\Delta}(\mu_k)^K \ominus\V_{-\Delta}(\mu_k)^G) \to (\V_{-\Delta}(\mu_k)^K \ominus\V_{-\Delta}(\mu_k)^G)$.

Define $m^0(T_k(A))= \dim\ker T_k(A)$ and

\[m^0(\A_{|\H'_k}(\cdot,\lambda))=
\left\{\begin{array}{lcl}
\dim\ker\left(\left[\begin{array}{cc}
\alpha_{1,k} &0\\
0& \alpha_{2,k}
\end{array}\right]\right)
& \text{if} & \dim(\V_{-\Delta}(\mu_k)^K \ominus\V_{-\Delta}(\mu_k)^G)>0\\
0& \text{if} & \dim(\V_{-\Delta}(\mu_k)^K \ominus\V_{-\Delta}(\mu_k)^G)=0.
\end{array}\right.
\]
Put

$i^0(A)= \sum\limits_{k=1}^{\infty}m^0(T_k(A))$ and $\widetilde{i^0}(A)=
\sum\limits_{k=1}^{\infty}m^0(\A_{|\H'_k}(\cdot,\lambda))$.
It is easy to see that:
\begin{Lemma}
$\nabla\Phi$ is an isomorphism if and only if $i^0(A)= 0$. Fix  $\lambda\in\Lambda$. $\A(\cdot,\lambda)$ is an isomorphism if and only if $\widetilde{i^0}(A)= 0$.
\end{Lemma}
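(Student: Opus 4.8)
The plan is to exploit the block decomposition already established. The Neumann eigenfunctions of $-\Delta$ give an $H^1$-orthogonal splitting $\H=\bigoplus_{k=1}^{\infty}\H_k$, and since both $L$ (acting pointwise as the matrix) and $C_A=\nabla\eta$ preserve each $\H_k$, the operator $\nabla\Phi=L-\nabla\eta$ is block diagonal, $\nabla\Phi=\bigoplus_{k=1}^{\infty}(\nabla\Phi)_{|\H_k}$. By the computation preceding the lemma, each block is, after diagonalising $T_k(A)$, the map $\diag(\alpha_{1,k}\Id,\alpha_{2,k}\Id)$ on $\V_{-\Delta}(\mu_k)\oplus\V_{-\Delta}(\mu_k)$. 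Hence the whole operator is an isomorphism precisely when every block is invertible, and the finite-dimensional block on $\H_k$ is invertible exactly when $\alpha_{1,k}\alpha_{2,k}\neq 0$.

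For the first equivalence I would argue as follows. Since $L$ is an isomorphism and $\nabla\eta$ is completely continuous, $\nabla\Phi=L-\nabla\eta$ is Fredholm of index $0$, so it is an isomorphism if and only if it is injective. Injectivity is equivalent to $\ker(\nabla\Phi)_{|\H_k}=\{0\}$ for every $k$, i.e. to $\det T_k(A)\neq 0$, i.e. to $\dim\ker T_k(A)=m^0(T_k(A))=0$ for every $k$. Summing over $k$ this says exactly $i^0(A)=\sum_{k}m^0(T_k(A))=0$. The sum is finite because $T_k(A)\to L$ as $k\to\infty$ (the correction term carries the factor $\tfrac{1}{1+\mu_k}\to 0$), so $\alpha_{1,k}\to 1$ and $\alpha_{2,k}\to -1$, and only finitely many blocks can be singular.

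For the second equivalence I would repeat the argument on $\im\pi_1=\H^K\ominus\H^G=\bigoplus_{k}\H'_k$. For the linear system $\A(u,\lambda)=L_2u-\pi_1(\nabla\eta(i(u,\lambda)))$ is linear in $u$ and, as computed above, independent of $\lambda$; it is again a compact perturbation of the isomorphism $L_2$, hence Fredholm of index $0$, so $\A(\cdot,\lambda)$ is an isomorphism iff it is injective. By the block formula $(\A_{|\H'_k}(\cdot,\lambda))=\diag(\alpha_{1,k}\Id,\alpha_{2,k}\Id)$ on $\H'_k$, injectivity is equivalent to the vanishing of $m^0(\A_{|\H'_k}(\cdot,\lambda))$ for every $k$ with $\dim(\V_{-\Delta}(\mu_k)^K\ominus\V_{-\Delta}(\mu_k)^G)>0$ (the levels with $\H'_k=\{0\}$ impose no condition and contribute $0$ by definition). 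Summing gives $\widetilde{i^0}(A)=0$.

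The routine parts are the two reductions to finite-dimensional blocks; the one point that needs genuine care is upgrading ``each finite block is invertible'' to ``the operator is a Hilbert-space isomorphism'', i.e. producing a \emph{bounded} inverse. This is where the asymptotics $\alpha_{1,k}\to 1,\ \alpha_{2,k}\to -1$ are essential: they guarantee both that $i^0(A)$ and $\widetilde{i^0}(A)$ are finite and that the block inverses $(\nabla\Phi)_{|\H_k}^{-1}$ (resp. $(\A_{|\H'_k}(\cdot,\lambda))^{-1}$) are uniformly bounded, so that $\bigoplus_k (\nabla\Phi)_{|\H_k}^{-1}$ defines a bounded inverse on all of $\H$. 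I expect this uniform-boundedness step, rather than the algebra of the individual blocks, to be the main obstacle to a fully rigorous write-up.
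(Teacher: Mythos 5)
Your argument is correct and is exactly the verification the paper has in mind: the paper offers no proof at all (the lemma is prefaced only by ``It is easy to see that''), and you fill the gap using precisely the block decomposition $\H=\bigoplus_k\H_k$, the diagonalisation $(\nabla\Phi)_{|\H_k}=\diag(\alpha_{1,k}\Id,\alpha_{2,k}\Id)$, and the definitions of $m^0$ and $i^0$ already set up in the text. One small remark: the uniform-boundedness issue you flag at the end is not really an extra obstacle, since once you know $\nabla\Phi$ (resp.\ $\A(\cdot,\lambda)$) is an injective Fredholm operator of index $0$ it is surjective, and the bounded inverse theorem supplies the bounded inverse; the asymptotics $\alpha_{1,k}\to 1$, $\alpha_{2,k}\to -1$ are only needed to see that the sums defining $i^0(A)$ and $\widetilde{i^0}(A)$ are finite.
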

Naturally, if $\nabla\Phi$ is an isomorphism, so is $\A(\cdot,\lambda)$ for every $\lambda\in\Lambda$.

Denote by $m^-(T_k(A))$ the Morse index of the matrix $T_k(A)$.
 Note that $m^-(T_k(A))\in\{0,1,2\}$ and for a sufficiently large $k$, $m^-(T_k(A))=1$. Define the subspaces:
\[
\V_0(A)=\bigoplus\limits_{k\colon m^-(T_k(A))=0}
\V_{-\Delta}(\mu_k)^K\ominus\V_{-\Delta}(\mu_k)^G,\
 \ \V_2(A)=\bigoplus\limits_{k \colon m^-(T_k(A))=2}
 \V_{-\Delta}(\mu_k)^K\ominus\V_{-\Delta}(\mu_k)^G.\]

\begin{Theorem}\label{Theorem1}
 Consider system \eqref{lin} satisfying $\widetilde{i^0}(A)= 0$  and fix $\lambda\in\H^G$. Then
\[\nabla_{N(K)}\text{-}\deg(\A(\cdot,\lambda), B(\im\pi_1))=\nabla_{N(K)}\text{-}\deg(-\Id, B(V_2(A)))
\star\left(\nabla_{N(K)}\text{-}\deg(-\Id, B(V_0(A)))\right)^{-1}.\]
\end{Theorem}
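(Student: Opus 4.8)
The plan is to reduce the computation to the behaviour of the degree on finitely many finite--dimensional blocks, using three properties of $\nabla_{N(K)}\text{-}\deg$ recalled in the Appendix: homotopy invariance, multiplicativity over $N(K)$--invariant orthogonal splittings compatible with the approximation scheme $\Gamma$, and the normalization that renders the reference $L_2$ the unit $\I$ of the Euler ring $U(N(K))$. Abbreviate $W_k=\V_{-\Delta}(\mu_k)^K\ominus\V_{-\Delta}(\mu_k)^G$. The starting point is that, for the linear system \eqref{lin}, the map $\A(\cdot,\lambda)$ is a bounded self--adjoint $N(K)$--equivariant isomorphism of $\im\pi_1$ (an isomorphism precisely because $\widetilde{i^0}(A)=0$ forces every $\alpha_{i,k}\neq 0$), it commutes with $\Gamma$, and it splits as the orthogonal sum of the blocks $\A_{|\H'_k}(\cdot,\lambda)=\diag(\alpha_{1,k}\Id,\alpha_{2,k}\Id)$ on $\H'_k=W_k\oplus W_k$.

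First I would pass to the sign model. Let $\mathcal{B}$ be the operator acting on $\H'_k$ by $\diag(\operatorname{sign}(\alpha_{1,k})\Id,\operatorname{sign}(\alpha_{2,k})\Id)$, and consider the affine homotopy $t\mapsto t\,\A(\cdot,\lambda)+(1-t)\mathcal{B}$. On the $\alpha_{i,k}$--eigendirection its eigenvalue is $t\alpha_{i,k}+(1-t)\operatorname{sign}(\alpha_{i,k})$, which keeps the (nonzero) sign of $\alpha_{i,k}$ throughout; hence the homotopy consists of isomorphisms. Because $m^-(T_k(A))=1$ for all large $k$, $\mathcal{B}$ differs from the reference $L_2$ only on finitely many blocks, so $\mathcal{B}=L_2-(\text{finite rank})$ and the whole homotopy remains an admissible compact perturbation of $L_2$. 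Homotopy invariance then gives $\nabla_{N(K)}\text{-}\deg(\A(\cdot,\lambda),B(\im\pi_1))=\nabla_{N(K)}\text{-}\deg(\mathcal{B},B(\im\pi_1))$.

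Next I would compare $\mathcal{B}$ with $L_2$. Write $\im\pi_1=\V'\oplus\V''$, where $\V'=\bigoplus_{k:\,m^-(T_k(A))\in\{0,2\}}\H'_k$ is finite--dimensional and $\mathcal{B}$ coincides with $L_2$ on the complementary (infinite--dimensional) $\V''=\bigoplus_{k:\,m^-(T_k(A))=1}\H'_k$. Multiplicativity gives $\nabla_{N(K)}\text{-}\deg(\mathcal{B},B(\im\pi_1))=\nabla_{N(K)}\text{-}\deg(\mathcal{B}_{|\V'},B(\V'))\star\nabla_{N(K)}\text{-}\deg((L_2)_{|\V''},B(\V''))$, while applying the same splitting to $L_2$ and invoking the normalization $\nabla_{N(K)}\text{-}\deg(L_2,B(\im\pi_1))=\I$ yields $\nabla_{N(K)}\text{-}\deg((L_2)_{|\V''},B(\V''))=(\nabla_{N(K)}\text{-}\deg((L_2)_{|\V'},B(\V')))^{-1}$. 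Thus the whole degree equals the finite--dimensional expression $\nabla_{N(K)}\text{-}\deg(\mathcal{B}_{|\V'},B(\V'))\star(\nabla_{N(K)}\text{-}\deg((L_2)_{|\V'},B(\V')))^{-1}$, on which $\nabla_{N(K)}\text{-}\deg$ reduces to the classical equivariant gradient degree and is multiplicative over the blocks $\H'_k$. On a block with $m^-(T_k(A))=2$ one has $\mathcal{B}_{|\H'_k}=-\Id$ while $(L_2)_{|\H'_k}$ has signature $(+,-)$, and a short computation using $\nabla_{N(K)}\text{-}\deg(\Id,\cdot)=\I$ gives the factor $\nabla_{N(K)}\text{-}\deg(-\Id,B(W_k))$; on a block with $m^-(T_k(A))=0$ one has $\mathcal{B}_{|\H'_k}=\Id$ and the factor becomes $(\nabla_{N(K)}\text{-}\deg(-\Id,B(W_k)))^{-1}$. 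Multiplying over $k$ and collecting the blocks into $\V_2(A)=\bigoplus_{m^-=2}W_k$ and $\V_0(A)=\bigoplus_{m^-=0}W_k$ produces the asserted identity.

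The delicate point is the treatment of the infinite tail: one must check that the splitting $\im\pi_1=\V'\oplus\V''$ is compatible with the fixed scheme $\Gamma$, that each operator in play is a genuine compact perturbation of $L_2$ so that $\nabla_{N(K)}\text{-}\deg$ is defined, and---most importantly---that the generic blocks $m^-(T_k(A))=1$ really match the reference (up to an $N(K)$--equivariant rotation of the two copies of $W_k$, which is admissible because $N(K)$ acts diagonally on $W_k\oplus W_k$) so that they contribute the unit. These are precisely the normalization and product properties of $\nabla_{N(K)}\text{-}\deg$ collected in the Appendix; once they are in hand, the rest is bookkeeping in the commutative ring $U(N(K))$.
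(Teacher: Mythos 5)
Your argument is correct and ends at the same block-by-block bookkeeping as the paper, but the reduction to finite dimensions is organized differently. The paper does not use any homotopy: it simply unfolds the definition \eqref{formulaofdegree}, writing for large $n$
\[
\nabla_{N(K)}\text{-}\deg(\A(\cdot,\lambda), B(\im\pi_1))=\left(\nabla_{N(K)}\text{-}\deg(L_2, B(\H'^n\ominus\H'^0))\right)^{-1}\star \nabla_{N(K)}\text{-}\deg(\A_{|\H'^n}(\cdot,\lambda), B(\H'^n)),
\]
and then expands both factors by the product formula: the $L_2$-factor contributes one copy of $\nabla_{N(K)}\text{-}\deg(-\Id, B(\V_{-\Delta}(\mu_k)^K\ominus\V_{-\Delta}(\mu_k)^G))$ for each $k$, while $\A_{|\H'_k}=\diag(\alpha_{1,k}\Id,\alpha_{2,k}\Id)$ contributes $m^-(T_k(A))$ copies, so the generic blocks with $m^-(T_k(A))=1$ cancel term by term and only $\V_0(A)$ and $\V_2(A)$ survive. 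Your route --- homotoping the whole infinite-dimensional operator to a sign model $\mathcal{B}$ and cancelling the infinite tail against the normalization $\nabla_{N(K)}\text{-}\deg(L_2,B(\im\pi_1))=\I$ --- is conceptually clean and avoids tracking $n$, but it obliges you to check two admissibility points the paper's route sidesteps. First, your assertion that $\mathcal{B}=L_2-(\text{finite rank})$ is not literally true: on the $m^-(T_k(A))=1$ blocks $\mathcal{B}$ is diagonal in the eigenbasis $f_{1,k},f_{2,k}$ of $T_k(A)$, not in the standard basis $\epsilon_1,\epsilon_2$, so $\mathcal{B}-L_2$ does not vanish there. The correct (and sufficient) statement is that $\mathcal{B}-L_2$ is compact, because each block is finite-dimensional and $T_k(A)\to L$ as $k\to\infty$, so the block norms of the difference tend to $0$; with that fix the whole affine homotopy stays of the form $L_2-(\text{compact})$ and is through isomorphisms, as you say. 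Second, identifying the tail degree with that of $L_2$ does require the equivariant-rotation (or conjugation-invariance) remark you sketch; since $N(K)$ acts diagonally on the two copies, this is legitimate. With these two points made precise, your proof is complete and equivalent in substance to the paper's.
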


\begin{proof}
From the definition of the degree, see formula \eqref{formulaofdegree}, for sufficiently large $n$ the following equality holds
\[
\left.\begin{array}{l}
\nabla_{N(K)}\text{-}\deg(\A(\cdot,\lambda), B(\im\pi_1))\\=
\left(\nabla_{N(K)}\text{-}\deg(L_2, B(\H'^n\ominus\H'^0))\right)^{-1}\star
\nabla_{N(K)}\text{-}\deg(\A_{|\H'^n}(\cdot,\lambda), B(\H'^n)).
\end{array}\right.
\]
Note that from the product formula, see Appendix, and from the definition of the function $L_2$ we obtain
\[
\nabla_{N(K)}\text{-}\deg(L_2, B(\H^n\ominus\H^0)) =
\nabla_{N(K)}\text{-}\deg(L_2, B(\bigoplus\limits^n_{k=1}
 \V_{-\Delta}(\mu_k)^K\ominus\V_{-\Delta}(\mu_k)^G))\] \[
=
\nabla_{N(K)}\text{-}\deg(-\Id,B(\V_{-\Delta}(\mu_1)^K\ominus\V_{-\Delta}(\mu_1)^G))\star \nabla_{N(K)}\text{-}\deg(-\Id,B(\V_{-\Delta}(\mu_2)^K\ominus\V_{-\Delta}(\mu_2)^G))\]
\[\star\ldots\star\nabla_{N(K)}\text{-}\deg(-\Id, B(\V_{-\Delta}(\mu_n)^K\ominus\V_{-\Delta}(\mu_n)^G)).
\]
If $m^-(T_k(A))=2$, then
\[
\left.\begin{array}{l}
\nabla_{N(K)}\text{-}\deg(\A_{|\H'_k}(\cdot,\lambda), B(\H'_k))\\
=\nabla_{N(K)}\text{-}\deg((-\Id,-\Id), B((\V_{-\Delta}(\mu_k)^K \ominus\V_{-\Delta}(\mu_k)^G)\oplus (\V_{-\Delta}(\mu_k)^K\ominus\V_{-\Delta}(\mu_k)^G)))\\
=
\nabla_{N(K)}\text{-}\deg(-\Id, B(\V_{-\Delta}(\mu_k)^K\ominus\V_{-\Delta}(\mu_k)^G))\star\nabla_{N(K)}\text{-}\deg(-\Id, B(\V_{-\Delta}(\mu_k)^K\ominus\V_{-\Delta}(\mu_k)^G))
\end{array}\right.
\]
When $m^-(T_k(A))=1$
\[
\nabla_{N(K)}\text{-}\deg(\A_{|\H'_k}(\cdot,\lambda), B(\H'_k))=
\nabla_{N(K)}\text{-}\deg(-\Id, B(\V_{-\Delta}(\mu_k)^K\ominus\V_{-\Delta}(\mu_k)^G)).
\]
In the remaining case $m^-(T_k(A))=0$ we have $\nabla_{N(K)}\text{-}\deg(\A_{|\H'_k}(\cdot,\lambda), B(\H'_k))=\I,$
which completes the proof.
\end{proof}

Consider the characteristic polynomial of $T_k(A)$ given by
\[W_k(\alpha_k)=(1-\frac{a}{1+\mu_k}-\alpha_k)(-1-\frac{c}{1+\mu_k}-\alpha_k)-\frac{b^2}{(1+\mu_k)^2}.\]
It is easy to verify that
\[
\left.\begin{array}{ll}
W_k(\alpha_k)&=\alpha_k^2 + \frac{a+c}{(1+\mu_k)}\alpha_k   + \frac{a-c}{(1+\mu_k)}  -1 +\frac{ac-b^2}{(1+\mu_k)^2} \\ & =\alpha_k^2 + \frac{a+c}{(1+\mu_k)}\alpha_k   +\frac{-(1+\mu_k)^2+(a-c)(1+\mu_k)+ac-b^2}{(1+\mu_k)^2}.
 \end{array}\right.\]
Since the matrix $T_k(A)$ is symmetric, the polynomial has two real roots, denote them by $\alpha_{1,k},\ \alpha_{2,k}$. From Viete's formulae we get
\[\alpha_{1,k}\alpha_{2,k}=\frac{-(1+\mu_k)^2+(a-c)(1+\mu_k)+ac-b^2}{(1+\mu_k)^2}, \ \ \alpha_{1,k}+\alpha_{2,k}=-\frac{a+c}{(1+\mu_k)}.\]
Because the sign of the sum does not depend on $k$, the matrix $T_k(A)$ has the roots of the same sign if and only if
$(1+\mu_k)^2-(a-c)(1+\mu_k)-(ac-b^2)<0$.
Solving the inequality (with respect to $1+\mu_k$) we obtain the discriminant
$\delta=(a-c)^2+4(ac-b^2)=(a+c)^2 - 4b^2$
and if $\delta\geq 0$, then $\beta_1=\frac{a-c-\sqrt{\delta}}{2}-1,\ \ \beta_2=\frac{a-c+\sqrt{\delta}}{2}-1$ are the roots of the polynomial $(1+\mu_k)^2-(a-c)(1+\mu_k)-(ac-b^2)$. 
If $\delta <0$, then we put $\beta_1=\beta_2=0$.
Note that if $(1+\mu_k)^2-(a-c)(1+\mu_k)-(ac-b^2) = 0$ for $\mu_k\in\sigma(-\delta,\Omega)$, then $\alpha_{1,k}=0$ or $\alpha_{2,k}=0$, so $i^0(A)\neq 0$ and if also $\dim(\V_{-\Delta}(\mu_k)^K \ominus\V_{-\Delta}(\mu_k)^G)>0$, then $\widetilde{i}^0(A)\neq 0$.

Let $P=\sigma(-\Delta,\Omega)\cap (\beta_1,\beta_2)$
and note that for every $\mu_k\in P$ we have
\begin{enumerate}
\item If $a+c<0$, then  $m^{-}(T_k(A))=2$.
\item If $a+c>0$, then  $m^{-}(T_k(A))=0$.
\end{enumerate}
Assume that $\widetilde{i^0}(A)=0$.

\begin{Theorem}
Under the above notations and assumptions:
\begin{enumerate}
\item if $a+c<0$, then
\[\nabla_{N(K)}\text{-}\deg(\A(\cdot,\lambda), B(\im\pi_1))=\nabla_{N(K)}\text{-}\deg(-\Id, B(\bigoplus\limits_{\mu\in P}\V_{-\Delta}(\mu)^K\ominus\V_{-\Delta}(\mu)^G)),\]
\item if $a+c>0$, then
\[\nabla_{N(K)}\text{-}\deg(\A(\cdot,\lambda), B(\im\pi_1))=\left(\nabla_{N(K)}\text{-}\deg(-\Id, B(\bigoplus\limits_{\mu\in P}\V_{-\Delta}(\mu)^K\ominus\V_{-\Delta}(\mu)^G))\right)^{-1},\]
\item
if the set $P$ is empty, then
$\nabla_{N(K)}\text{-}\deg(\A(\cdot,\lambda), B(\im\pi_1))=\I.$
\end{enumerate}

\end{Theorem}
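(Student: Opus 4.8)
The plan is to read off all three formulas directly from Theorem \ref{Theorem1}, whose right-hand side is expressed through the two spaces $V_0(A)$ and $V_2(A)$. Everything reduces to identifying these spaces explicitly in each of the three cases and to recalling the normalisation $\nabla_{N(K)}\text{-}\deg(-\Id,B(\{0\}))=\I$, the unit of the Euler ring $U(N(K))$. No new degree-theoretic input is needed beyond Theorem \ref{Theorem1} and the multiplicative structure of $U(N(K))$.

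First I would determine the Morse index $m^-(T_k(A))$ for \emph{every} index $k$, not only for $\mu_k\in P$. By Vi\`ete's formulae the product $\alpha_{1,k}\alpha_{2,k}=\det T_k(A)$ has the same sign as $-\big[(1+\mu_k)^2-(a-c)(1+\mu_k)-(ac-b^2)\big]$. This quadratic in $1+\mu_k$ has positive leading coefficient and, when $\delta\geq 0$, roots $\beta_1,\beta_2$; hence the product is positive exactly for $\mu_k\in(\beta_1,\beta_2)$, that is for $\mu_k\in P$, and strictly negative for $\mu_k\notin P$ (when $\delta<0$ it is negative for all $k$ and $P=\emptyset$). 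The borderline vanishing is excluded by the standing hypothesis $\widetilde{i^0}(A)=0$ on the subspaces $\V_{-\Delta}(\mu_k)^K\ominus\V_{-\Delta}(\mu_k)^G$ that actually enter. Consequently, for every $\mu_k\notin P$ the two eigenvalues have opposite signs, so $m^-(T_k(A))=1$ and such a $k$ contributes to neither $V_0(A)$ nor $V_2(A)$. For $\mu_k\in P$ I simply invoke the two statements recorded immediately before the theorem: $m^-(T_k(A))=2$ when $a+c<0$ and $m^-(T_k(A))=0$ when $a+c>0$.

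With this in hand the case analysis is immediate. If $a+c<0$, the indices with $m^-(T_k(A))=2$ are exactly those with $\mu_k\in P$, while no index has $m^-(T_k(A))=0$; therefore
\[
V_2(A)=\bigoplus_{\mu\in P}\V_{-\Delta}(\mu)^K\ominus\V_{-\Delta}(\mu)^G,\qquad V_0(A)=\{0\}.
\]
Substituting into Theorem \ref{Theorem1} and using $\nabla_{N(K)}\text{-}\deg(-\Id,B(\{0\}))^{-1}=\I$ yields the asserted formula in the case $a+c<0$. The case $a+c>0$ is symmetric: now $V_0(A)=\bigoplus_{\mu\in P}\V_{-\Delta}(\mu)^K\ominus\V_{-\Delta}(\mu)^G$ and $V_2(A)=\{0\}$, so the first factor in Theorem \ref{Theorem1} is $\I$ and the inverse lands on the second factor, giving the second asserted formula. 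Finally, if $P=\emptyset$ then every $\mu_k$ has $m^-(T_k(A))=1$, hence $V_0(A)=V_2(A)=\{0\}$ and the formula collapses to $\I\star\I^{-1}=\I$, which is the third assertion.

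The only genuinely substantive step is the second paragraph, namely establishing that $m^-(T_k(A))=1$ for \emph{all} $\mu_k\notin P$ and not merely for large $k$; this is what lets me conclude that the indices outside $P$ drop out of both $V_0(A)$ and $V_2(A)$. I do not expect a real obstacle here, since it follows from the sign of $\det T_k(A)$ computed above. The one point to handle with care is the degenerate situation $\alpha_{i,k}=0$, which is precisely what $\widetilde{i^0}(A)=0$ forbids on the relevant subspaces; everything else is a bookkeeping substitution into Theorem \ref{Theorem1} together with the normalisation of the degree on a trivial representation.
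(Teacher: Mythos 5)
Your proposal is correct and follows exactly the route the paper intends: the paper states this theorem without a separate proof, as an immediate consequence of Theorem \ref{Theorem1} together with the preceding sign analysis of $\det T_k(A)$ and $\operatorname{tr} T_k(A)$, which is precisely the identification of $V_0(A)$ and $V_2(A)$ that you carry out. Your explicit verification that $m^-(T_k(A))=1$ for all $\mu_k\notin P$ (so those indices contribute to neither space) is the one step the paper leaves tacit, and you handle it correctly.
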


Note that the condition $i^0(A)=0$ is satisfied if and only if for every $k\in \N$
\[-(1+\mu_k)^2+(a-c)(1+\mu_k)+ac-b^2\neq 0.
\]

From now on we consider the following nonlinear system 
\begin{equation}\label{system}
\left\{ \begin{array}{rcl}
-\Delta w_1 = \nabla_{w_1} F(w_1,w_2)& \text{in}&\Omega\\
\Delta w_2=\nabla_{w_2} F(w_1,w_2)& \text{in}&\Omega\\
\frac{\partial w_1}{\partial \nu}=\frac{\partial w_2}{\partial \nu}=0 & \text{on}& \partial \Omega,
\end{array}\right.
\end{equation}
Recall that with this system is associated the functional $\Phi \colon \H\to \R$ given by \[\Phi(w)=  \int\limits_{\Omega}\frac{1}{2}(|\nabla w_1(x)|^2 - |\nabla w_2(x)|^2) - F(w(x))dx.\]
and $\nabla \Phi (w) = L w -\nabla\eta(w)$, where $\nabla \eta$ is an operator defined by
\[
\langle \nabla\eta(w),v \rangle_{\H}= \int\limits_{\Omega}\langle L w(x),v(x)\rangle + \langle\nabla F(w(x)),v(x)\rangle dx.
\]
Moreover, $\nabla^2 \Phi (w) = L  -C_{\nabla^2 F(w)}$ for $w\in\H$, where the operator $C_{\nabla^2 F(w)}\colon\H\to\H$ is given by the equality
\[\langle C_{\nabla^2 F(w)}u,v\rangle_{\H}= \int\limits_{\Omega} \langle (L+\nabla^2 F(w(x)))u(x), v(x)\rangle  dx \text{ for } u,v \in \H. \]
Denote $\z=(\nabla F)^{-1}(0)$ and let $z\in \z$ be a non-degenerate critical point of the functional $\Phi$. Define
\[\langle C_{\nabla^2 F(z)}u,v\rangle_{\H}= \int\limits_{\Omega} \langle (L+\nabla^2 F(z))u(x), v(x)\rangle  dx \text{ for } u,v \in \H. \]
Denote
$\nabla^2 F(z)=
\left[\begin{array}{cc}
a(z) &b(z)\\
b(z)& c(z)
\end{array}\right]$.
Then
$T_k(\nabla^2 F(z))=
\left[\begin{array}{cc}
1-\frac{a(z)}{1+\mu_k} &-\frac{b(z)}{1+\mu_k}\\-\frac{b(z)}{1+\mu_k}& -1-\frac{c(z)}{1+\mu_k}
\end{array}\right]$
and let $\alpha_{1,k}(z),$ $\alpha_{2,k}(z)$ be the (real) eigenvalues of the matrix $T_k(\nabla^2 F(z))$.
Then
\[
(\nabla^2\Phi(z))_{|H_k}=
\left[\begin{array}{cc}
\alpha_{1,k}(z) \Id &0\\
0& \alpha_{2,k}(z)\Id
\end{array}\right],\]
where $\Id\colon \V_{-\Delta}(\mu_{k}) \to \V_{-\Delta}(\mu_{k})$ is the identical function.
Note that for a sufficiently large $k$, $m^-(T_k(\nabla^2 F(z)))=1$.

Define $\lambda\in\H^G$ by $\lambda(x)=z$ for every $x\in\Omega$.
Since the derivative of $\A$ with respect to $u$ satisfies $\A_u'(0,\lambda)=\pi_1\circ\nabla^2\Phi(0,0,\lambda)\circ i$, it follows that if $\dim(\V_{-\Delta}(\mu_k)^K \ominus\V_{-\Delta}(\mu_k)^G)>0$, then
\[
(\A_{|\H'_k}(\cdot,\lambda))=
\left[\begin{array}{cc}
\alpha_{1,k}(z) \Id &0\\
0& \alpha_{2,k}(z)\Id
\end{array}\right],\]
where $\Id\colon (\V_{-\Delta}(\mu_k)^K \ominus\V_{-\Delta}(\mu_k)^G) \to (\V_{-\Delta}(\mu_k)^K \ominus\V_{-\Delta}(\mu_k)^G)$.

Define the subspaces
\[\V_0(\nabla^2 F(z))=\bigoplus\limits_{k\colon m^-(T_k(\nabla^2 F(z)))=0}\V_{-\Delta}(\mu_k)^K\ominus\V_{-\Delta}(\mu_k)^G,\] \[ \V_2(\nabla^2 F(z))=\bigoplus\limits_{k\colon m^-(T_k(\nabla^2 F(z)))=2}\V_{-\Delta}(\mu_k)^K\ominus\V_{-\Delta}(\mu_k)^G.\]

\begin{Theorem}
Let $z\in \z$ be such that $\widetilde{i^0}(\nabla^2 F(z))= 0$. Then there exists $\gamma_0$ such that for every $0<\gamma<\gamma_0$
\[
\left.\begin{array}{l}\nabla_{N(K)}\text{-}\deg(\A(\cdot,\lambda), B_{\gamma}(\im\pi_1))=\nabla_{N(K)}\text{-}\deg(\A_u'(0,\lambda), B(\im\pi_1))\\
=\nabla_{N(K)}\text{-}\deg(-\Id, B(V_2(\nabla^2 F(z)))\star\left(\nabla_{N(K)}\text{-}\deg(-\Id, B(V_0(\nabla^2 F(z)))\right)^{-1}.
\end{array}\right.
\]
\end{Theorem}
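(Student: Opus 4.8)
The plan is to establish the two displayed equalities separately. The second is nothing but Theorem~\ref{Theorem1} applied to the linearization, while the first is a linearization (reduction to the derivative at the origin) statement, proved by homotopy invariance of the degree.

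First I would record, as already computed in the paragraphs preceding the statement, that the derivative $\A_u'(0,\lambda)=\pi_1\circ\nabla^2\Phi(0,0,\lambda)\circ i$ restricts on each block $\H'_k$ to the diagonal operator with entries $\alpha_{1,k}(z)\Id$ and $\alpha_{2,k}(z)\Id$; in other words $\A_u'(0,\lambda)$ is exactly the operator $\A(\cdot,\lambda)$ attached to the linear system \eqref{lin} with matrix $A=\nabla^2 F(z)$, and in particular it is self-adjoint with real spectrum. Since by hypothesis $\widetilde{i^0}(\nabla^2 F(z))=0$, the isomorphism criterion established above shows that $\A_u'(0,\lambda)$ is an isomorphism of $\im\pi_1$; being also bounded, there is a constant $m>0$ with $\|\A_u'(0,\lambda)u\|\geq m\|u\|$ for all $u\in\im\pi_1$, so the origin is an isolated zero of $\A(\cdot,\lambda)$.

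For the first equality I would use homotopy invariance. Writing $\A(u,\lambda)=\A_u'(0,\lambda)u+R(u)$ with $\|R(u)\|=o(\|u\|)$ as $\|u\|\to 0$, choose $\gamma_0>0$ so small that $\|R(u)\|\leq \tfrac{m}{2}\|u\|$ whenever $\|u\|\leq\gamma_0$. For any $0<\gamma<\gamma_0$ consider $H(u,t)=(1-t)\A_u'(0,\lambda)u+t\,\A(u,\lambda)=\A_u'(0,\lambda)u+t\,R(u)$ for $t\in[0,1]$. On $\partial B_\gamma(\im\pi_1)$ one has $\|H(u,t)\|\geq m\|u\|-\tfrac{m}{2}\|u\|=\tfrac{m}{2}\gamma>0$, so $H$ is admissible. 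Each $H(\cdot,t)$ is $N(K)$-equivariant, and since $\A(\cdot,\lambda)$ is a gradient by Lemma~\ref{Lemma1} and the self-adjoint linear map $\A_u'(0,\lambda)$ is the gradient of $u\mapsto\tfrac12\langle\A_u'(0,\lambda)u,u\rangle$, the convex combination $H(\cdot,t)$ is again a gradient. Thus $H$ is an admissible homotopy in the class for which $\nabla_{N(K)}\text{-}\deg$ is defined, and homotopy invariance gives the equality $\nabla_{N(K)}\text{-}\deg(\A(\cdot,\lambda),B_\gamma(\im\pi_1))=\nabla_{N(K)}\text{-}\deg(\A_u'(0,\lambda),B_\gamma(\im\pi_1))$. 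Since the degree of a linear isomorphism is independent of the radius of the ball, the latter equals $\nabla_{N(K)}\text{-}\deg(\A_u'(0,\lambda),B(\im\pi_1))$, which is the first equality.

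Because $\A_u'(0,\lambda)$ coincides with the operator of Theorem~\ref{Theorem1} for $A=\nabla^2 F(z)$, and $V_0(\nabla^2 F(z))$, $V_2(\nabla^2 F(z))$ are the corresponding spaces $V_0(A)$, $V_2(A)$, the hypothesis $\widetilde{i^0}(A)=0$ of that theorem is exactly our assumption, so it yields the second equality. The step requiring genuine care is the homotopy argument: one must verify simultaneously that the linear homotopy stays zero-free on the small sphere, remains $N(K)$-equivariant, and preserves the gradient structure, so that the homotopy invariance of the strongly indefinite degree is applicable.
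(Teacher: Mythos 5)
Your proof is correct and, for the second equality, identical to the paper's (both just invoke Theorem~\ref{Theorem1} with $A=\nabla^2F(z)$). The only difference is in the first equality: the paper simply cites \cite{degree} for it (it is the linearization property recorded as item~1(d) of the degree's properties in the Appendix), whereas you reconstruct that property from scratch via the linear homotopy $H(u,t)=\A_u'(0,\lambda)u+tR(u)$. Your reconstruction is sound, and you correctly flag the one point that needs checking for the strongly indefinite degree, namely that the homotopy is admissible within the class $L_2-(\text{compact})$: since both $\A(\cdot,\lambda)=L_2-\pi_1\nabla\eta(i(\cdot,\lambda))$ and $\A_u'(0,\lambda)=L_2-\pi_1 C_{\nabla^2F(z)}\,i$ share the same leading operator $L_2$, the convex combination keeps the form $L_2 u-\nabla_u\eta(u,t)$ with $\nabla_u\eta$ compact and gradient, so homotopy invariance applies. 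The net effect is that your argument is more self-contained than the paper's two-line proof, at the cost of re-proving a property already available off the shelf.
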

\begin{proof}
We refer the reader to \cite{degree} for the proof of the first equality. The second equality follows from theorem \ref{Theorem1}.
\end{proof}

Put $\delta(z)=(a(z)-c(z))^2+4(a(z)c(z)-b(z)^2)=(a(z)+c(z))^2-4b(z)^2$ and
if $\delta(z)>0$, then put $\beta_1(z)=\frac{a(z)-c(z)-\sqrt{\delta(z)}}{2}-1,\ \ \beta_2(z)=\frac{a(z)-c(z)+\sqrt{\delta(z)}}{2}-1$.
If $\delta(z) <0$, then put $\beta_1(z)=\beta_2(z)=0$. Define $P(z)=\sigma(-\Delta,\Omega)\cap (\beta_1(z),\beta_2(z))$. Similarly as before it can be shown that the matrix $T_k(\nabla^2 F(z))$ has roots of the same sign if and only if $\mu_k\in P(z)$. Moreover, for any $\mu_k\in P(z)$:
\begin{enumerate}
\item If $a(z)+c(z)<0$, then  $m^{-}(T_k(\nabla^2 F(z)))=2$.
\item If $a(z)+c(z)>0$, then  $m^{-}(T_k(\nabla^2 F(z)))=0$.
\end{enumerate}

Now we are able to give formulae of the degree of the functional associated with system \eqref{system}. Assume that $\widetilde{i^0}(\nabla^2 F(z))=0$.

\begin{Theorem}\label{Theorem2}
Under the above notions and assumptions:
\begin{enumerate}
\item if $a(z)+c(z)<0$, then for a sufficiently small $\gamma>0$
\[\nabla_{N(K)}\text{-}\deg(\A(\cdot,\lambda), B_{\gamma}(\im\pi_1))=
\nabla_{N(K)}\text{-}\deg(-\Id, B(\bigoplus\limits_{\mu\in P(z)}\V_{-\Delta}(\mu_k)^K\ominus\V_{-\Delta}(\mu_k)^G)),\]
\item if $a(z)+c(z)>0$, then for a sufficiently small $\gamma>0$
\[\nabla_{N(K)}\text{-}\deg(\A(\cdot,\lambda), B_{\gamma}(\im\pi_1))=
\left(\nabla_{N(K)}\text{-}\deg(-\Id, B(\bigoplus\limits_{\mu\in P(z)}\V_{-\Delta}(\mu_k)^K\ominus\V_{-\Delta}(\mu_k)^G))\right)^{-1},\]
\item
if the set $P(z)$ is empty, then for a sufficiently small $\gamma>0$
\[\nabla_{N(K)}\text{-}\deg(\A(\cdot,\lambda), B_{\gamma}(\im\pi_1))=\I.\]
\end{enumerate}
\end{Theorem}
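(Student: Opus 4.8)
The plan is to read off Theorem \ref{Theorem2} from the preceding theorem, which already expresses $\nabla_{N(K)}\text{-}\deg(\A(\cdot,\lambda), B_{\gamma}(\im\pi_1))$ as $\nabla_{N(K)}\text{-}\deg(-\Id, B(V_2(\nabla^2 F(z))))\star(\nabla_{N(K)}\text{-}\deg(-\Id, B(V_0(\nabla^2 F(z)))))^{-1}$ for all sufficiently small $\gamma>0$. It therefore suffices to identify the two subspaces $V_0(\nabla^2 F(z))$ and $V_2(\nabla^2 F(z))$ explicitly in terms of the set $P(z)$ and the sign of $a(z)+c(z)$; once this is done, each of the three cases follows by substitution, using that the degree on the unit ball of the trivial representation equals the unit $\I$ of the Euler ring $U(N(K))$.

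First I would pin down the Morse index $m^-(T_k(\nabla^2 F(z)))$ for every $k$ with $\dim(\V_{-\Delta}(\mu_k)^K\ominus\V_{-\Delta}(\mu_k)^G)>0$. The hypothesis $\widetilde{i^0}(\nabla^2 F(z))=0$ guarantees that neither $\alpha_{1,k}(z)$ nor $\alpha_{2,k}(z)$ vanishes on these contributing modes, so $m^-(T_k(\nabla^2 F(z)))\in\{0,1,2\}$ is governed solely by the signs of the two eigenvalues. As recalled just above the statement, the product $\alpha_{1,k}(z)\alpha_{2,k}(z)$ is positive precisely when $\mu_k\in P(z)$; hence the two eigenvalues share a sign exactly on $P(z)$ and have opposite signs off $P(z)$, the latter situation forcing $m^-(T_k(\nabla^2 F(z)))=1$. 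On $P(z)$ the common sign is then determined by the trace of $T_k(\nabla^2 F(z))$, hence by the sign of $a(z)+c(z)$, and invoking the dichotomy stated immediately before the theorem gives $m^-=2$ when $a(z)+c(z)<0$ and $m^-=0$ when $a(z)+c(z)>0$.

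With this trichotomy in hand the three cases are immediate. If $a(z)+c(z)<0$, then the contributing modes split into those with $\mu_k\in P(z)$, all carrying $m^-=2$, and the rest, all carrying $m^-=1$; hence $V_2(\nabla^2 F(z))=\bigoplus_{\mu\in P(z)}\V_{-\Delta}(\mu)^K\ominus\V_{-\Delta}(\mu)^G$ while $V_0(\nabla^2 F(z))=\{0\}$, and since the second factor of the preceding theorem becomes $\I^{-1}=\I$, the formula collapses to part (1). Interchanging the roles of $0$ and $2$ yields part (2) when $a(z)+c(z)>0$. Finally, if $P(z)=\emptyset$, then every contributing mode has $m^-=1$, so both $V_0(\nabla^2 F(z))$ and $V_2(\nabla^2 F(z))$ are trivial and the formula returns $\I\star\I^{-1}=\I$, which is part (3).

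The one point deserving care is bookkeeping rather than analysis: the modes with $m^-(T_k(\nabla^2 F(z)))=1$, namely those with $\mu_k\notin P(z)$, must contribute to neither $V_0$ nor $V_2$ and so disappear from the final answer. This is exactly the cancellation already incorporated into Theorem \ref{Theorem1} and its nonlinear counterpart, where for each such block the normalizing factor $(\nabla_{N(K)}\text{-}\deg(L_2,\cdot))^{-1}$ precisely annihilates the single $\nabla_{N(K)}\text{-}\deg(-\Id,\cdot)$ factor produced by $\A_{|\H'_k}(\cdot,\lambda)$. Granting this, the proof reduces to a direct substitution into the preceding theorem, and no further estimate is required.
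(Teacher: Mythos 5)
Your argument is correct and follows essentially the same route as the paper, which states Theorem \ref{Theorem2} without a separate proof precisely because it is obtained by substituting the identification of $\V_0(\nabla^2 F(z))$ and $\V_2(\nabla^2 F(z))$ in terms of $P(z)$ and the sign of $a(z)+c(z)$ (established in the paragraph immediately preceding the statement) into the preceding theorem, exactly as you do. The cancellation of the $m^-(T_k(\nabla^2 F(z)))=1$ modes that you single out is indeed the bookkeeping already built into the proof of Theorem \ref{Theorem1}.
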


The following theorem will be used to formulate a bifurcation theorem. The proof is similar in spirit to that of \cite{Garza}. We use the notation from this article.
\begin{Lemma}\label{connected}
Let the above assumptions hold and assume that the group $G$ is connected.  If $\W_1$ or $\W_2$ is a nontrivial $G$-representation, then
\[\nabla_{G}\text{-}\deg(-\Id, B(\W_1)) \neq \nabla_{G}\text{-}\deg(-\Id, B(\W_2))^{-1}.\]
\end{Lemma}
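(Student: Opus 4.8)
The plan is to collapse the two representations into one, then reduce the connected group $G$ to a circle subgroup, where an explicit computation in $U(S^{1})$ finishes the argument. Since $-\Id$ is an isomorphism its degree is a unit of $U(G)$, so by the product formula (see Appendix) the asserted inequality $\nabla_{G}\text{-}\deg(-\Id, B(\W_1)) \neq \nabla_{G}\text{-}\deg(-\Id, B(\W_2))^{-1}$ is equivalent to $\nabla_{G}\text{-}\deg(-\Id, B(\W_1 \oplus \W_2)) \neq \I$. Writing $\W = \W_1 \oplus \W_2$, the hypothesis that $\W_1$ or $\W_2$ is nontrivial means precisely that $\W$ is a nontrivial $G$-representation. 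Thus it suffices to prove: if $G$ is connected and $\W$ is a nontrivial orthogonal $G$-representation, then $\nabla_{G}\text{-}\deg(-\Id, B(\W)) \neq \I$.

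Next I would pass to a circle subgroup, using the identity $\nabla_{G}\text{-}\deg(-\Id, B(\W)) = \I - \chi_{G}(S(\W))$, where $\chi_{G}$ is the $G$-equivariant Euler characteristic in $U(G)$ and $S(\W)$ is the unit sphere; this follows from the normalization and excision properties of the degree (Appendix), so the claim becomes $\chi_{G}(S(\W)) \neq 0$. Because $G$ is connected and acts nontrivially on $\W$, some one-parameter subgroup acts nontrivially, hence its closure is a torus $T$ on which $\W$ restricts nontrivially; choosing a nonzero weight $\alpha$ of $\W|_{T}$ and a circle $S^{1} \subseteq T$ with $\alpha|_{S^{1}} \neq 0$ produces a subgroup $S^{1} \leq G$ with $\W|_{S^{1}}$ nontrivial. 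The restriction map $r \colon U(G) \to U(S^{1})$ is a ring homomorphism satisfying $r(\chi_{G}(X)) = \chi_{S^{1}}(X)$, so it is enough to establish the case $G = S^{1}$. This is the step that genuinely uses connectedness: a finite group contains no such circle, and indeed the statement fails for $G = \Z_{2}$ acting antipodally on $\W = \R \oplus \R$, where the degree equals $\I$.

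Finally, for $G = S^{1}$ I would compute the degree directly. Decompose $\W|_{S^{1}} = \R^{m_{0}} \oplus \bigoplus_{k \geq 1} V_{k}^{m_{k}}$, where $\R$ carries the trivial action, $V_{k}$ is the plane on which $S^{1}$ acts by rotation with speed $k$ (isotropy $\Z_{k}$), and some $m_{k} > 0$ with $k \geq 1$ by nontriviality. Using the product formula together with the values $\nabla_{S^{1}}\text{-}\deg(-\Id, B(\R)) = -\I$ and $\nabla_{S^{1}}\text{-}\deg(-\Id, B(V_{k})) = \I - [S^{1}/\Z_{k}]$, and the fact that the classes $[S^{1}/\Z_{k}]$ span a square-zero ideal of $U(S^{1})$ (so all their pairwise products vanish), the degree collapses to
\[
\nabla_{S^{1}}\text{-}\deg(-\Id, B(\W|_{S^{1}})) = (-1)^{m_{0}}\Bigl(\I - \sum_{k \geq 1} m_{k}\,[S^{1}/\Z_{k}]\Bigr).
\]
If $m_{0}$ is even this equals $\I - \sum_{k} m_{k}[S^{1}/\Z_{k}]$, which differs from $\I$ in the coefficient of some $[S^{1}/\Z_{k}]$ with $k \geq 1$; if $m_{0}$ is odd its coefficient of $\I$ is $-1 \neq 1$. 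In either case the degree is not $\I$, completing the reduction and the proof.

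The main obstacle I expect is this $S^{1}$-computation, and in particular justifying its two inputs: the value $\nabla_{S^{1}}\text{-}\deg(-\Id, B(V_{k})) = \I - [S^{1}/\Z_{k}]$ (equivalently, the identification of $\chi_{S^{1}}(S(V_{k}))$ with the orbit class $[S^{1}/\Z_{k}]$) and the vanishing of all products $[S^{1}/\Z_{j}] \star [S^{1}/\Z_{k}]$ in $U(S^{1})$. Both are standard properties of the Euler ring and the gradient degree, recorded in \cite{degree} and used in \cite{Garza}; the only subtlety is that marks alone cannot detect the inequality, since every class $[S^{1}/\Z_{k}]$ has all its marks equal to zero, so one must argue at the level of the distinguished additive basis of $U(S^{1})$ rather than through fixed-point indices.
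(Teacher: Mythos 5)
Your proposal is correct, and it reaches the conclusion by a route that differs from the paper's in its key computational step. Both arguments share the same skeleton: reduce the inequality to the statement that a single product in the Euler ring is not $\I$, and then restrict along a ring homomorphism to a torus subgroup, using connectedness of $G$ to guarantee that a nontrivial $G$-representation restricts to a nontrivial representation of that torus. The paper stops at a maximal torus $T$ and invokes Theorem 3.2 of \cite{Garza}, which describes $\chi_T(S^{\W_i})$ as $(-1)^{k_0}\I+(-1)^{k_0}x+y$ with $x$ a nonnegative combination of classes $\chi_T(T/K)$, $\dim K=\dim T-1$, and $y$ supported on lower-dimensional isotropy; the contradiction comes from the fact that multiplication cannot cancel the sign-definite codimension-one part $x+x'$. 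You instead first merge the two factors via the product formula into $\nabla_G\text{-}\deg(-\Id,B(\W_1\oplus\W_2))\neq\I$ and then restrict one step further, to a circle $S^1\subset T$ on which a nonzero weight survives; there the computation is completely explicit, since $U(S^1)$ is free abelian on $\I$ and the classes $[S^1/\Z_k]$ with $[S^1/\Z_j]\star[S^1/\Z_k]=0$, giving $(-1)^{m_0}\bigl(\I-\sum_k m_k[S^1/\Z_k]\bigr)\neq\I$. What your version buys is independence from the Garza--Rybicki structure theorem for general tori, at the price of the (easy) extra step of choosing a circle not annihilating the chosen weight; what the paper's version buys is that it never needs to descend below the maximal torus and treats the two factors symmetrically throughout. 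Your closing observation that marks alone cannot detect the inequality (all fixed-point sets of $S^1/\Z_k$ are circles, hence have vanishing Euler characteristic) correctly identifies why the argument must be run against the distinguished additive basis of the Euler ring, which is exactly the role the basis $\chi_T(T/K^+)$ plays in the paper's proof.
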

\begin{proof}
Recall that $\nabla_{G}\text{-}\deg(-\Id, B(\W_i))=\chi_{G}(S^{\W_i})$ for $i=1,2$, see \cite{Geba}, where $S^{\W_i} = D(\W_i)/S(\W_i)$ is a quotient of the closed unit ball and the unit sphere in $\W_i$ and $\chi_{G}$ denotes the $G$-invariant Euler characteristic of $S^{\W_i}$. Suppose the assertion of the lemma is false, that is $\chi_T(S^{\W_1})\star\chi_T(S^{\W_2})=\mathbb{I}\in U(G)$, the unit in $U(G)$. Denote by $T\subset G$ a maximal torus and notice that because $\W_i$ are $G$-representations, $\W_i$ can be also treated as $T$-representations. The natural homomorphism $\varphi\colon T \to G$ induces a ring homomorphism $\varphi^*\colon U(T) \to U(G)$ such that $\varphi^*(\chi_{G}(S^{\W_i}))=\chi_T(S^{\W_i})$. Hence $\chi_T(S^{\W_1})\star\chi_T(S^{\W_2})=\mathbb{I}\in U(T)$. From theorem 3.2 of \cite{Garza} it follows that there exist $k_0,\ k_1, \ldots, k_r,\ k'_0,\ k'_1, \ldots, k'_{r'}\in \N\cup\{0\}$, $K_{m_1},\ldots,K_{m_r},\ K_{m'_1},\ldots,K_{m'_{r'}}\in\sub(T)$, $x,\ x',\ y,\ y'\in U(T)$ such that
 \begin{enumerate}
 \item $\dim K_{m_i}=\dim K_{m'_i}=\dim T - 1$ for every $i$,
 \item $x=\sum\limits^r_{i=1} k_i \cdot \chi_T(T/K_{m_i})$,
 \item $x'=\sum\limits^{r'}_{i=1} k'_i \cdot \chi_T(T/K_{m'_i})$,
 \item $y=\sum\limits_{(K)\in\{(\mathcal{K})\in\sub[T]:\dim \mathcal{K} <\dim T -1 \}} n(K)\cdot\chi_T(T/K^+)$, where $n(K)\in\Z$,
 \item $y'=\sum\limits_{(K)\in\{(\mathcal{K})\in\sub[T]:\dim \mathcal{K} <\dim T -1 \}} n'(K)\cdot\chi_T(T/K^+)$, where $n'(K)\in\Z$,
\item $\chi_T(S^{\W_1})=(-1)^{k_0}\mathbb{I}+(-1)^{k_0}x +y$,
\item $\chi_T(S^{\W_2})=(-1)^{k'_0}\mathbb{I}+(-1)^{k'_0}x' +y'$.
 \end{enumerate}
 From the assumptions (since $G$ is connected) it follows that $x\neq\mathbb{I}$ or $x'\neq\mathbb{I}$.
  It is easy to see that \[\left.\begin{array}{l}
  ((-1)^{k_0}\mathbb{I}+(-1)^{k_0}x+y)((-1)^{k'_0}\mathbb{I}+(-1)^{k'_0}x'+y')\\
=(-1)^{k_0+k'_0}\mathbb{I}+(-1)^{k_0+k'_0}x+(-1)^{k_0+k'_0}x'\\
+(-1)^{k'_0}y+(-1)^{k_0}y'+(-1)^{k_0+k'_0}xx'+(-1)^{k_0}xy'+(-1)^{k'_0}x'y+yy'.\end{array}\right.\]
  Note that if $k_0+k'_0$ is even, then $(-1)^{k_0+k'_0}=1$ and 
 \[\left.\begin{array}{l}
  (-1)^{k_0+k'_0}\mathbb{I}+(-1)^{k_0+k'_0}x+(-1)^{k_0+k'_0}x'= \mathbb{I}+x+x'\neq \mathbb{I}.
  \end{array}\right.\]
  If $k_0+k'_0$ is odd, then $(-1)^{k_0+k'_0}=-1$ and
   \[\left.\begin{array}{l}
  (-1)^{k_0+k'_0}\mathbb{I}+(-1)^{k_0+k'_0}x+(-1)^{k_0+k'_0}x'= -\mathbb{I}-x-x'\neq \mathbb{I}.
  \end{array}\right.
  \]
  From the properties of the multiplication in $U(T)$, see \cite{Garza}, it is easy to verify that \[(-1)^{k'_0}y+(-1)^{k_0}y'+xx'+xy'+x'y+yy'=\sum\limits_{(K)\in\{(\mathcal{K})\in\sub[T]:\dim \mathcal{K} <\dim T -1 \}} n''(K)\cdot\chi_T(T/K^+),\] where $n''(K)\in\Z$. Therefore $\chi_T(S^{\W_1})\star\chi_T(S^{\W_2})=((-1)^{k_0}\mathbb{I}+x+y)((-1)^{k'_0}\mathbb{I}+x'+y') \neq \mathbb{I}$. This contradiction completes the proof.
\end{proof}

\section{The main results}
In this section we apply the bifurcation theory to formulate conditions implying breaking of symmetries. That, is we formulate theorems to answer the problem: does there exist a function $f=(f_1,f_2)\in\H^{G}$ such that the system   
\begin{equation}\label{rowNeuSN1}
\left\{ \begin{array}{rcl}
-\Delta w_1 = \nabla_{w_1} F(w_1,w_2)+f_1& \text{in}&\Omega\\
\Delta w_2=\nabla_{w_2} F(w_1,w_2)+f_2& \text{in}&\Omega\\
\frac{\partial w_1}{\partial \nu}=\frac{\partial w_2}{\partial \nu}=0 & \text{on}& \partial \Omega,
\end{array}\right.
\end{equation}
has a weak solution  $w\in \H^K \backslash \H^{G}$?

Recall that $\Phi\colon\H^1(\Omega)\times\H^1(\Omega)\to\R$ is the functional associated with the above system and
\begin{enumerate}
\item $\Omega$ is an open, bounded and $G$-invariant subset of an orthogonal $G$-representation $\R^n$, with a smooth boundary,
\item $F\in C^2(\R^2,\R)$,
\item  $|\nabla^2 F(y)|\leq a +b|y|^q$, where $a,\ b\in\R,\ q<\frac{4}{n-2}$ for $n\geq 3$ and $q<\infty$ for $n=2$.
\end{enumerate}

Assume that the set $\z=(\nabla F)^{-1}(0)$ is finite and fix $z\in\z$.
In the previous sections we have introduced the following notation: $\nabla^2 F(z)=
\left[\begin{array}{cc}
a(z) &b(z)\\
b(z)& c(z)
\end{array}\right]$,
$\delta(z)=(a(z)+c(z))^2-4b(z)^2$ and
 if $\delta(z)>0$, then $\beta_1(z)=\frac{a(z)-c(z)-\sqrt{\delta(z)}}{2}-1,\ \ \beta_2(z)=\frac{a(z)-c(z)+\sqrt{\delta(z)}}{2}-1$.
 If $\delta(z) <0$, then $\beta_1(z)=\beta_2(z)=0$. We have also put $P(z)=\sigma(-\Delta,\Omega)\cap (\beta_1(z),\beta_2(z))$.

Denote
\[
m_1=\min\{\beta_1(z):z\in\z\},\ \ M_1=\max\{\beta_1(z):z\in\z\},
\]
\[
m_2=\min\{\beta_2(z):z\in\z\},\ \ M_2=\max\{\beta_2(z):z\in\z\}.
\]

Assume that $P(z)\neq\emptyset$ and $\gamma>0$ is a sufficiently small number.
By theorem \ref{Theorem2} we obtain
\begin{equation}\label{degree}
\left.\begin{array}{l}\nabla_{N(K)}\text{-}\deg(\A(\cdot,\lambda), B_{\gamma}(\im\pi_1))=\\
 \\
=\left\{
\begin{array}{l}
\nabla_{N(K)}\text{-}\deg(-\Id, B(\bigoplus\limits_{\mu\in P(z)}(\V_{-\Delta}(\mu)^K\ominus\V_{-\Delta}(\mu)^G)))\ \text{, when } a(z)+c(z)<0\\
\left(\nabla_{N(K)}\text{-}\deg(-\Id, B(\bigoplus\limits_{\mu\in P(z)}(\V_{-\Delta}(\mu)^K\ominus\V_{-\Delta}(\mu)^G)))\right)^{-1}\ \text{, when } a(z)+c(z)>0.
\end{array}\right.
\end{array}\right.
\end{equation}
If $P(z)= \emptyset$, then $\nabla_{N(K)}\text{-}\deg(\A(\cdot,\lambda), B_{\gamma}(\im\pi_1))=\mathbb{I}\in U(N(K)).$

Recall that for every $\lambda\in\Lambda$, $\A(0,\lambda)=0$.
In the following theorems we formulate conditions implying  a global bifurcation of solutions of the equation $\A(u,\lambda)=0$.
To do this we use coefficients of the matrix $\nabla^2 F(z)$. We use the notation: if $z_1,\ z_2\in \z$, then $\lambda_1,\ \lambda_2\in\H^G$ are defined by $\lambda_1(x)=z_1$  and $\lambda_2 (x)=z_2$ for every $x\in\Omega$.
\begin{Theorem}\label{lam1}
Let the above assumptions hold. Assume that there exist $z_1,\ z_2\in \z$ such that
 \begin{enumerate}
  \item $P(z_1)\neq\emptyset$ and $\bigoplus\limits_{\mu\in P(z_1)}\V_{-\Delta}(\mu)$ is a nontrivial $N(K)$-representation or $P(z_2)\neq\emptyset$ and $\bigoplus\limits_{\mu\in P(z_2)}\V_{-\Delta}(\mu)$ is a nontrivial $N(K)$-representation,
  \item $a(z_1)+c(z_1)<0<a(z_2)+c(z_2)$,
 \item $\widetilde{i^0}(\nabla^2 F(z_1))=\widetilde{i^0}(\nabla^2 F(z_2))=0$,
 \item $N(K)$ is a connect group.
 \end{enumerate}
 Then there exists a global bifurcation point of solutions of the equation $\A(u,\lambda)=0.$
 \end{Theorem}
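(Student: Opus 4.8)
The plan is to establish a global bifurcation point by comparing the degree of the linearized operator $\A(\cdot,\lambda_1)$ at $z_1$ with that at $z_2$, and invoking a degree-theoretic bifurcation criterion: if the degree of $\A'_u(0,\lambda)$ changes as $\lambda$ passes from $\lambda_1$ to $\lambda_2$, then a global bifurcation must occur somewhere on the connecting segment. Concretely, I would parametrize a path $\lambda_t \in \H^G$ (for instance the straight line from $\lambda_1$ to $\lambda_2$, or more safely through the finite set $\z$) and argue that the two endpoint degrees cannot both lie on the same branch of trivial solutions unless a nontrivial connected component of $\cl(\mathcal{N})$ emerges.

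\medskip
First I would use Theorem \ref{Theorem2} to compute the two endpoint degrees explicitly. Since $a(z_1)+c(z_1)<0$ and $P(z_1)\neq\emptyset$, the degree at $\lambda_1$ equals $\nabla_{N(K)}\text{-}\deg(-\Id, B(\bigoplus_{\mu\in P(z_1)}(\V_{-\Delta}(\mu)^K\ominus\V_{-\Delta}(\mu)^G)))$, while since $a(z_2)+c(z_2)>0$, the degree at $\lambda_2$ equals the inverse $\left(\nabla_{N(K)}\text{-}\deg(-\Id, B(\bigoplus_{\mu\in P(z_2)}(\V_{-\Delta}(\mu)^K\ominus\V_{-\Delta}(\mu)^G)))\right)^{-1}$ in the Euler ring $U(N(K))$. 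The hypothesis $\widetilde{i^0}(\nabla^2 F(z_i))=0$ for $i=1,2$ guarantees both degrees are well defined (the linearizations are isomorphisms, so $0$ is an isolated solution and the balls contain no nontrivial solutions on their boundaries).

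\medskip
Next, the decisive step is to show these two degrees are \emph{different} elements of $U(N(K))$. This is exactly where Lemma \ref{connected} enters. By hypothesis (4) the group $N(K)$ is connected, and by hypothesis (1) at least one of $\bigoplus_{\mu\in P(z_1)}\V_{-\Delta}(\mu)$ or $\bigoplus_{\mu\in P(z_2)}\V_{-\Delta}(\mu)$ is a nontrivial $N(K)$-representation. Setting $\W_1 = \bigoplus_{\mu\in P(z_1)}(\V_{-\Delta}(\mu)^K\ominus\V_{-\Delta}(\mu)^G)$ and $\W_2 = \bigoplus_{\mu\in P(z_2)}(\V_{-\Delta}(\mu)^K\ominus\V_{-\Delta}(\mu)^G)$, Lemma \ref{connected} yields $\nabla_{N(K)}\text{-}\deg(-\Id, B(\W_1)) \neq \left(\nabla_{N(K)}\text{-}\deg(-\Id, B(\W_2))\right)^{-1}$, so the endpoint degrees are distinct. (A minor point I would verify is that the nontriviality of the full eigenspace sum $\bigoplus_{\mu\in P(z_i)}\V_{-\Delta}(\mu)$ assumed in (1) transfers to nontriviality of the relevant $\W_i$; this should follow from the structure of these spaces as $N(K)$-representations, possibly requiring the projection argument of Section 2.)

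\medskip
Finally, I would invoke the standard global bifurcation principle for the $\nabla_{N(K)}$-degree (stated in the Appendix): since $\A(0,\lambda)=0$ for all $\lambda$, and the local degrees $\nabla_{N(K)}\text{-}\deg(\A(\cdot,\lambda_t), B_\gamma(\im\pi_1))$ are homotopy invariants that are constant on subintervals free of bifurcation points, a change in this degree between $t$ corresponding to $z_1$ and $z_2$ forces a local bifurcation point at some intermediate $\lambda_{t_0}$. The connected component $C(\lambda_{t_0})$ of $\cl(\mathcal{N})$ through $(0,\lambda_{t_0})$ must then, by the degree-theoretic version of the Rabinowitz alternative, either be unbounded or return to the trivial branch at a second parameter value — i.e. $(0,\lambda_{t_0})$ is a global bifurcation point. \textbf{The main obstacle} I anticipate is making the homotopy along $\lambda_t$ admissible: one must ensure that no nontrivial solutions of $\A(u,\lambda_t)=0$ escape to the boundary $\partial B_\gamma(\im\pi_1)$ uniformly along the path, which requires an a priori bound and careful control of the radius $\gamma$ as $\lambda_t$ varies, exploiting that $\z$ is finite and that $\widetilde{i^0}$ vanishes only at the endpoints.
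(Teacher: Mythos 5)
Your proposal follows essentially the same route as the paper's proof: compute the two local degrees via Theorem \ref{Theorem2} (formula \eqref{degree}), show they differ using Lemma \ref{connected} with $\W_i=\bigoplus_{\mu\in P(z_i)}(\V_{-\Delta}(\mu)^K\ominus\V_{-\Delta}(\mu)^G)$, and conclude by the abstract global bifurcation theorem \ref{GLOB}. The ``main obstacle'' you anticipate (admissibility of the homotopy along the path of parameters) is already subsumed in the statement of Theorem \ref{GLOB}, which the paper simply invokes, so no additional a priori bounds are needed at this stage.
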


\begin{proof}
From formula \eqref{degree} it follows that for sufficiently small $\gamma_1,\ \gamma_2>0$ :
\[
\nabla_{N(K)}\text{-}\deg(\A(\cdot,\lambda_1), B_{\gamma_1}(\im\pi_1))=\nabla_{N(K)}\text{-}\deg(-\Id, B(\bigoplus\limits_{\mu\in P(z_1)}(\V_{-\Delta}(\mu)^K\ominus\V_{-\Delta}(\mu)^G)))
\]
and
\[
\nabla_{N(K)}\text{-}\deg(\A(\cdot,\lambda_2), B_{\gamma_2}(\im\pi_1))=\nabla_{N(K)}\text{-}\deg(-\Id, B(\bigoplus\limits_{\mu\in P(z_2)}(\V_{-\Delta}(\mu)^K\ominus\V_{-\Delta}(\mu)^G)))^{-1}
\]
From lemma \ref{connected} we obtain that \[\nabla_{N(K)}\text{-}\deg(\A(\cdot,\lambda_1), B_{\gamma_1}(\im\pi_1))\neq\nabla_{N(K)}\text{-}\deg(\A(\cdot,\lambda_2),B_{\gamma_2}(\im\pi_1)).\] Therefore the theorem follows from theorem \ref{GLOB}.
\end{proof}

Also, if between $m_1$ and $M_1$ exists $\mu\in\sigma(-\Delta,\Omega)$, then there can appear breaking of symmetries.

\begin{Theorem}\label{lam2}
Suppose that for every $z\in\z$ either $a(z)+c(z)>0$ or $a(z)+c(z)<0$.
Let $z_1,\ z_2\in \z$ be such that $m_1=\beta_1(z_1),\ M_1=\beta_1(z_2)$ and assume that $\widetilde{i^0}(\nabla^2 F(z_1))=\widetilde{i^0}(\nabla^2 F(z_2))=0.$ Suppose that one of the following conditions is satisfied:
\begin{enumerate}
\item There exists $\mu_i\in\sigma(-\Delta,\Omega)$ such that $m_1<\mu_i<M_1$, where
$\V_{-\Delta}(\mu_i)^K\ominus\V_{-\Delta}(\mu_i)^G$ is a nontrivial $N(K)$-representation and for every $\mu_j\in\sigma(-\Delta,\Omega)$
if
$m_2<\mu_j<M_2$, then $\V_{-\Delta}(\mu_j)^K\ominus\V_{-\Delta}(\mu_j)^G$ is a trivial, even-dimensional $N(K)$-representation.

\item There exist $\mu_i,\ \mu_j\in\sigma(-\Delta,\Omega)$ such that $m_1<\mu_i<M_1$ and $\mu_j<m_2\leq M_2<\mu_{j+1}$, where
$\V_{-\Delta}(\mu_i)^K\ominus\V_{-\Delta}(\mu_i)^G$ is a nontrivial $N(K)$-representation.

\end{enumerate}
 Then there exists a global bifurcation point of solutions of the equation $\A(u,\lambda)=0.$
\end{Theorem}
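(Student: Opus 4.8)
The plan is to mirror the proof of Theorem \ref{lam1}. Using Theorem \ref{Theorem2} (equivalently formula \eqref{degree}) I would first compute the two local degrees $\nabla_{N(K)}\text{-}\deg(\A(\cdot,\lambda_1),B_{\gamma_1}(\im\pi_1))$ and $\nabla_{N(K)}\text{-}\deg(\A(\cdot,\lambda_2),B_{\gamma_2}(\im\pi_1))$ for sufficiently small $\gamma_1,\gamma_2>0$, then show that they differ, and finally invoke theorem \ref{GLOB}. Write
\[
V_i=\bigoplus\limits_{\mu\in P(z_i)}\left(\V_{-\Delta}(\mu)^K\ominus\V_{-\Delta}(\mu)^G\right),\qquad X_i=\nabla_{N(K)}\text{-}\deg(-\Id,B(V_i))\in U(N(K)),
\]
for $i=1,2$. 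Since $\widetilde{i^0}(\nabla^2 F(z_i))=0$ and, by hypothesis, $a(z_i)+c(z_i)\neq0$, formula \eqref{degree} gives $\nabla_{N(K)}\text{-}\deg(\A(\cdot,\lambda_i),B_{\gamma_i}(\im\pi_1))=X_i^{\epsilon_i}$, where $\epsilon_i=+1$ if $a(z_i)+c(z_i)<0$ and $\epsilon_i=-1$ if $a(z_i)+c(z_i)>0$ (with $X_i=\I$ when $P(z_i)=\emptyset$). Because the signs $\epsilon_1,\epsilon_2$ are not prescribed, I would reduce $X_1^{\epsilon_1}\neq X_2^{\epsilon_2}$ to the two sign-independent claims $X_1\neq X_2$ and $X_1\neq X_2^{-1}$; taking inverses where needed, these cover all four choices of $(\epsilon_1,\epsilon_2)\in\{+1,-1\}^2$.

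Both claims rest on describing $P(z_i)=\sigma(-\Delta,\Omega)\cap(\beta_1(z_i),\beta_2(z_i))$, where $\beta_1(z_1)=m_1$, $\beta_1(z_2)=M_1$ and $\beta_2(z_i)\in[m_2,M_2]$ (using $\beta_1(z)\leq\beta_2(z)$, which also yields $m_1\leq m_2$ and $M_1\leq M_2$). First I would check that the distinguished eigenvalue $\mu_i$ with $m_1<\mu_i<M_1$ lies in $P(z_1)$. Indeed $\mu_i<M_1\leq M_2$, so if $\mu_i\geq\beta_2(z_1)\geq m_2$ then either $\mu_i\in(m_2,M_2)$, forcing $\V_{-\Delta}(\mu_i)^K\ominus\V_{-\Delta}(\mu_i)^G$ to be trivial and contradicting the hypothesis on $\mu_i$, or $\mu_i=\beta_2(z_1)$, forcing $\dim(\V_{-\Delta}(\mu_i)^K\ominus\V_{-\Delta}(\mu_i)^G)=0$ by $\widetilde{i^0}(\nabla^2 F(z_1))=0$, again a contradiction. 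Hence $\mu_i<\beta_2(z_1)$, so $\mu_i\in P(z_1)$, while $\mu_i\notin P(z_2)$ since $\mu_i<M_1$. Thus $V_1$ contains the nontrivial summand $\V_{-\Delta}(\mu_i)^K\ominus\V_{-\Delta}(\mu_i)^G$ and is a nontrivial $N(K)$-representation, and Lemma \ref{connected} (with $N(K)$ connected) gives at once $X_1\neq X_2^{-1}$.

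For the claim $X_1\neq X_2$ I would compute $X_1\star X_2^{-1}$ by the product formula; since $U(N(K))$ is commutative the factors over $P(z_1)\cap P(z_2)$ cancel, leaving a product of degrees over $P(z_1)\setminus P(z_2)$ and over $P(z_2)\setminus P(z_1)$. Any eigenvalue in these symmetric-difference sets that exceeds $M_1$ lies between the two right endpoints $\beta_2(z_1),\beta_2(z_2)$, hence in $[m_2,M_2)$; by the case hypotheses — in case (1) the triviality and even-dimensionality of the reduced representations on $(m_2,M_2)$, in case (2) the absence of any eigenvalue in $[m_2,M_2]$ — together with $\widetilde{i^0}(\nabla^2 F(z_i))=0$ to dispose of the endpoint $m_2$, each such summand is trivial and even-dimensional and so contributes $\nabla_{N(K)}\text{-}\deg(-\Id,B(\V_{-\Delta}(\mu)^K\ominus\V_{-\Delta}(\mu)^G))=\I$. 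These factors drop out, and $P(z_2)\setminus P(z_1)$ consists only of such eigenvalues, so
\[
X_1\star X_2^{-1}=\nabla_{N(K)}\text{-}\deg(-\Id,B(V_Q)),
\]
where $V_Q$ is the direct sum of the summands of $V_1$ with $\mu\in(m_1,M_1]$. As $V_Q$ contains $\V_{-\Delta}(\mu_i)^K\ominus\V_{-\Delta}(\mu_i)^G$ it is nontrivial, and applying Lemma \ref{connected} once more (with the trivial representation $\{0\}$) gives $\nabla_{N(K)}\text{-}\deg(-\Id,B(V_Q))\neq\I$, whence $X_1\neq X_2$.

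With both claims in hand, $X_1^{\epsilon_1}\neq X_2^{\epsilon_2}$ in every sign case, so the two local degrees differ and theorem \ref{GLOB} produces the desired global bifurcation point of solutions of $\A(u,\lambda)=0$. The main obstacle is precisely the bookkeeping of the last two paragraphs: determining exactly which eigenvalues enter $P(z_1)$ and $P(z_2)$, controlling the two possibly distinct right endpoints $\beta_2(z_1),\beta_2(z_2)\in[m_2,M_2]$, and handling the boundary eigenvalues at $m_2$ and $M_2$ — for which the assumptions $\widetilde{i^0}(\nabla^2 F(z_1))=\widetilde{i^0}(\nabla^2 F(z_2))=0$ are exactly what is required.
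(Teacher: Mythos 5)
Your proof is correct and follows the same route as the paper: the paper's own proof of Theorem \ref{lam2} simply asserts that the two local degrees at $\lambda_1$ and $\lambda_2$ differ (``it is easy to see'') and then invokes Theorem \ref{GLOB}, and your argument supplies exactly the bookkeeping behind that assertion, including the correct treatment of the endpoints $m_2$, $M_1$ via the conditions $\widetilde{i^0}(\nabla^2 F(z_i))=0$. Note only that, as in Theorem \ref{lam1}, connectedness of $N(K)$ must be carried as a standing hypothesis for Lemma \ref{connected} to apply; you make this explicit, whereas the paper leaves it implicit in the statement of Theorem \ref{lam2}.
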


\begin{proof}
It easy to see that both the assumptions imply that for sufficiently small $\gamma_1,\ \gamma_2>0$  \[\nabla_{N(K)}\text{-}\deg(\A(\cdot,\lambda_1), B_{\gamma_1}(\im\pi_1))\neq\nabla_{N(K)}\text{-}\deg(\A(\cdot,\lambda_2),B_{\gamma_2}(\im\pi_1)).\] Therefore the thesis follows from theorem \ref{GLOB}.
\end{proof}

Analogously as the previous theorem, if between $m_2$ and $M_2$ exists $\mu\in\sigma(-\Delta,\Omega)$ we can formulate and prove conditions implying breaking of symmetries.

\begin{Theorem}\label{lam3}
Suppose that for every $z\in\z$ either $a(z)+c(z)>0$ or $a(z)+c(z)<0$.
Let $z_1,\ z_2\in \z$ be such that $m_2=\beta_2(z_1),\ M_2=\beta_2(z_2)$ and assume that $\widetilde{i^0}(\nabla^2 F(z_1))=\widetilde{i^0}(\nabla^2 F(z_2))=0.$ Suppose that one of the following conditions is satisfied:
\begin{enumerate}
\item There exists $\mu_j\in\sigma(-\Delta,\Omega)$ such that $m_2<\mu_j<M_2$, where
$\V_{-\Delta}(\mu_j)^K\ominus\V_{-\Delta}(\mu_j)^G$ is a nontrivial $N(K)$-representation and for every $\mu_i\in\sigma(-\Delta,\Omega)$ if
$m_1<\mu_i<M_1$, then $\V_{-\Delta}(\mu_i)^K\ominus\V_{-\Delta}(\mu_i)^G$ is a trivial, even-dimensional $N(K)$-representation.
\item There exist $\mu_i,\ \mu_j\in\sigma(-\Delta,\Omega)$ such that $\mu_i<m_1\leq M_1<\mu_{i+1}$ and $m_2<\mu_j<M_2$, where
$\V_{-\Delta}(\mu_j)^K\ominus\V_{-\Delta}(\mu_j)^G$ is a nontrivial $N(K)$-representation.

\end{enumerate}
 Then there exists a global bifurcation point of solutions of the equation $\A(u,\lambda)=0.$
\end{Theorem}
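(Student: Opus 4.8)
The plan is to mirror the proof of Theorem \ref{lam2}, interchanging the roles of the left endpoints $\beta_1$ and the right endpoints $\beta_2$, and to close the argument with the global bifurcation theorem \ref{GLOB}. Concretely, I would set $\lambda_1(x)=z_1$ and $\lambda_2(x)=z_2$ for every $x\in\Omega$ and compare, for sufficiently small $\gamma_1,\gamma_2>0$, the degrees $\nabla_{N(K)}\text{-}\deg(\A(\cdot,\lambda_1), B_{\gamma_1}(\im\pi_1))$ and $\nabla_{N(K)}\text{-}\deg(\A(\cdot,\lambda_2), B_{\gamma_2}(\im\pi_1))$. Since $\widetilde{i^0}(\nabla^2 F(z_1))=\widetilde{i^0}(\nabla^2 F(z_2))=0$ and $a(z_i)+c(z_i)\neq 0$, formula \eqref{degree} (i.e.\ Theorem \ref{Theorem2}) expresses each of them as $\nabla_{N(K)}\text{-}\deg(-\Id, B(\W_i))^{\pm 1}$, where $\W_i=\bigoplus_{\mu\in P(z_i)}(\V_{-\Delta}(\mu)^K\ominus\V_{-\Delta}(\mu)^G)$ and the sign of the exponent is dictated by the sign of $a(z_i)+c(z_i)$. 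The entire task is then to prove that these two elements of $U(N(K))$ are distinct, for then Theorem \ref{GLOB} produces a global bifurcation point of $\A(u,\lambda)=0$.

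The key step is the comparison of the intervals $P(z_1)=\sigma(-\Delta,\Omega)\cap(\beta_1(z_1),m_2)$ and $P(z_2)=\sigma(-\Delta,\Omega)\cap(\beta_1(z_2),M_2)$, using $\beta_2(z_1)=m_2$, $\beta_2(z_2)=M_2$ and $\beta_1(z_1),\beta_1(z_2)\in[m_1,M_1]$. Their symmetric difference is confined to $(m_1,M_1)$ on the left and to $(m_2,M_2)$ on the right. Under hypothesis (2) there is no eigenvalue in $[m_1,M_1]$, so the two intervals contain exactly the same eigenvalues below their right endpoints and differ only by eigenvalues in $(m_2,M_2)$, among them the prescribed $\mu_j$ whose isotypical component $\V_{-\Delta}(\mu_j)^K\ominus\V_{-\Delta}(\mu_j)^G$ is a nontrivial $N(K)$-representation. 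Under hypothesis (1) the eigenvalues in $(m_1,M_1)$ may genuinely differ between the two intervals, but each such summand is a trivial, even-dimensional $N(K)$-representation; since $-\Id$ is $N(K)$-equivariantly homotopic to $\Id$ on a trivial even-dimensional space (through rotations, which commute with the trivial action), every such factor equals the unit $\I\in U(N(K))$ and drops out of the product. In both cases I conclude that, after discarding factors equal to $\I$, the representation $\W_2$ carries a nontrivial $N(K)$-summand arising from $\mu_j$ that is absent from $\W_1$.

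From here I would reduce, via the product formula in $U(N(K))$ recalled in the Appendix, to comparing $\nabla_{N(K)}\text{-}\deg(-\Id, B(\W_1'))^{\pm1}$ with $\nabla_{N(K)}\text{-}\deg(-\Id, B(\W_2'))^{\pm1}$, where $\W_1',\W_2'$ are obtained by deleting the common summands and the trivial even-dimensional ones, so that $\W_2'$ is a nontrivial $N(K)$-representation. When the two exponents have opposite signs this is precisely the inequality $\nabla_{N(K)}\text{-}\deg(-\Id, B(\W_1'))\neq\nabla_{N(K)}\text{-}\deg(-\Id, B(\W_2'))^{-1}$ supplied by Lemma \ref{connected}, applicable because $N(K)$ is connected and $\W_2'$ is nontrivial; when the exponents share a sign one argues, through the same structural description of $U(N(K))$ (obtained by pulling back to a maximal torus, as in the proof of Lemma \ref{connected}), that the surviving nontrivial summand cannot be trivialized and therefore separates the two degrees. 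I expect the main obstacle to be exactly this bookkeeping: tracking the exponent sign coming from $a(z_i)+c(z_i)$ while simultaneously cancelling the trivial even-dimensional factors, and organizing it so that Lemma \ref{connected} governs the surviving nontrivial summand. Once that is done, one obtains $\nabla_{N(K)}\text{-}\deg(\A(\cdot,\lambda_1), B_{\gamma_1}(\im\pi_1))\neq\nabla_{N(K)}\text{-}\deg(\A(\cdot,\lambda_2), B_{\gamma_2}(\im\pi_1))$, and Theorem \ref{GLOB} finishes the proof.
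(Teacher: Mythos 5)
Your proposal is correct and follows the same route the paper intends: the paper gives no separate proof for this theorem (it is declared ``analogous'' to Theorem \ref{lam2}, whose own proof is the one-line claim that the two local degrees at $\lambda_1$ and $\lambda_2$ differ, followed by Theorem \ref{GLOB}), and your comparison of $P(z_1)$ with $P(z_2)$, the cancellation of the trivial even-dimensional factors, and the appeal to Lemma \ref{connected} (which, applied with one representation taken to be $\{0\}$, also settles the equal-exponent case) is precisely the bookkeeping that ``it is easy to see'' suppresses. Note only that your --- correct --- use of Lemma \ref{connected} requires $N(K)$ to be connected, a hypothesis stated explicitly in Theorem \ref{lam1} but omitted from the statements of Theorems \ref{lam2} and \ref{lam3}.
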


Recall that if there exists a global bifurcation point of solutions of the equation $\A(u,\lambda)=0$, then  there exists $\lambda_0\in\Lambda$ such that the connected component $C(\lambda_0)$ in the closure of the set $\mathcal{N}=\{(u,\lambda)\in (\im\pi_1\setminus\{0\})\times\Lambda: \A(u,\lambda)=0\}$ satisfies $C(\lambda_0) \neq \{(0, \lambda_0)\}$ and either   $C(\lambda_0)$ is not bounded or $C(\lambda_0) \cap ({0}\times(\Lambda \setminus \{\lambda_0\}) \neq \emptyset$. In particular, we obtain a connected set of solutions of our main problem.

Moreover, for every $(u,\lambda)\in C(\lambda_0)\setminus(\{0\}\times\Lambda)$, the equality $\A(u,\lambda)=0$ implies that
$\pi_1(\nabla\Phi(i(u,\lambda)))=0$, so $\nabla\Phi(i(u,\lambda))\in \H^G$ and therefore there exists $f=(f_1,f_2)\in\H^G$ such that the system
\begin{equation}\label{111}
\left\{ \begin{array}{rcl}
-\Delta w_1 = \nabla_{w_1} F(w_1,w_2)+f_1 & \text{in}&\Omega\\
\Delta w_2=\nabla_{w_2} F(w_1,w_2)+f_2 & \text{in}&\Omega\\
\frac{\partial w_1}{\partial \nu}=\frac{\partial w_2}{\partial \nu}=0 & \text{on}& \partial \Omega,
\end{array}\right.
\end{equation}
has a solution in  $\H^K\backslash \H^G$. That is, $i(C(\lambda_0)\setminus(\{0\}\times\Lambda))$ is a set of solutions of the main problem. Moreover, we have obtained that the whole connected set $i(C(\lambda_0))$ is mapped by $\nabla\Phi$ into a connected set in $\H^G$.

\section{Examples}
In this section we show an application of the bifurcation theorems.  Consider the following system
\begin{equation}\label{rowNeukula}
\left\{ \begin{array}{rcl}
-\Delta w_1 = \nabla_{w_1} F(w_1,w_2)& \text{in}&B^n\\
\Delta w_2=\nabla_{w_2} F(w_1,w_2)& \text{in}&B^n\\
\frac{\partial w_1}{\partial \nu}=\frac{\partial w_2}{\partial \nu}=0 & \text{on}& \partial S^{n-1},
\end{array}\right.
\end{equation}
where
\begin{enumerate}
\item $B^n$ is an open unit ball in an orthogonal $\SO(n)$-representation $\R^n$, where $\SO(n)$ is a special orthogonal group,
\item $F\in C^2(\R^2,\R)$,
\item $|\nabla^2 F(y)|\leq a +b|y|^q$, where $a,b\in\R,\ q<\frac{4}{n-2}$ for $n\geq 3$ and $q<\infty$ for $n=2$.
\end{enumerate}
Denote $\z=(\nabla F)^{-1}(0)$ and define the function $\varphi\colon\R\times\z\to\R$ by the formula $\varphi(x,z)=-x^2+(a(z)-c(z))x+a(z)c(z)-b(z)^2$. Put $\K=\{1+\mu:\mu\in\sigma(-\Delta,B^n)\}$. Recall that the set $\sigma(-\Delta,B^n)$ is discrete and therefore so is the set $\K$.

\begin{Theorem}\label{Example}
Suppose that there exist $z_1,\ z_2\in \z$ such that
\begin{enumerate}
\item $\varphi(\cdot, z_i)^{-1}(0)\cap \K=\emptyset$ for $i=1, 2$,
\item $P(z_1)\neq\emptyset$ and $\bigoplus\limits_{\mu\in P(z_1)}\V_{-\Delta}(\mu)$ is a nontrivial $N(K)$-representation or $P(z_2)\neq\emptyset$ and $\bigoplus\limits_{\mu\in P(z_2)}\V_{-\Delta}(\mu)$ is a nontrivial $N(K)$-representation,
\item $a(z_1)+c(z_1)<0<a(z_2)+c(z_2)$.
\end{enumerate}
Then there exists a connected set of points in $\H^K\backslash \H^G$ such that for each point from this set there exists $f\in\H^G$ such that these points are solutions of the system
\begin{equation}\label{1}
\left\{ \begin{array}{rcl}
-\Delta w_1 = \nabla_{w_1} F(w_1,w_2) +f_1& \text{in}&\Omega\\
\Delta w_2=\nabla_{w_2} F(w_1,w_2) +f_2& \text{in}&\Omega\\
\frac{\partial w_1}{\partial \nu}=\frac{\partial w_2}{\partial \nu}=0 & \text{on} &\partial \Omega.
\end{array}\right.
\end{equation}
\end{Theorem}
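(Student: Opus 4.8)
The plan is to recognise Theorem \ref{Example} as a specialisation of the global bifurcation result Theorem \ref{lam1} to the $\SO(n)$-symmetric problem on the ball $B^n$, and then to invoke the translation from bifurcation of $\A(u,\lambda)=0$ back to the original symmetry-breaking problem recorded after Theorem \ref{lam3}. So the work splits into two tasks: verifying that the three hypotheses of Theorem \ref{Example}, together with the ambient $\SO(n)$-structure, supply all four hypotheses of Theorem \ref{lam1}, and then reading off the geometric conclusion.

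First I would match the hypotheses. Hypotheses (2) and (3) of Theorem \ref{Example} coincide verbatim with hypotheses (1) and (2) of Theorem \ref{lam1}, so nothing is needed there. The connectedness hypothesis (4) of Theorem \ref{lam1} I would obtain from the fact that we break the symmetry of the connected group $G=\SO(n)$ to a subgroup $K$ whose normalizer $N(K)$ is connected; this is the structural role of the ball-and-$\SO(n)$ setting, in which the eigenspaces $\V_{-\Delta}(\mu)$ are the classical spaces of spherical harmonics and their $N(K)$-module structure is explicit.

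The key computational step is to show that hypothesis (1), namely $\varphi(\cdot,z_i)^{-1}(0)\cap\K=\emptyset$, is exactly the condition $i^0(\nabla^2 F(z_i))=0$, whence $\widetilde{i^0}(\nabla^2 F(z_i))=0$, giving hypothesis (3) of Theorem \ref{lam1}. Indeed, substituting $x=1+\mu_k$ gives
\[
\varphi(1+\mu_k,z)=-(1+\mu_k)^2+(a(z)-c(z))(1+\mu_k)+a(z)c(z)-b(z)^2,
\]
which is precisely the quantity whose nonvanishing for all $k$ characterises $i^0(\nabla^2 F(z))=0$. Since $\K=\{1+\mu:\mu\in\sigma(-\Delta,B^n)\}$, hypothesis (1) says that no eigenvalue produces a root, i.e. $i^0(\nabla^2 F(z_i))=0$. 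Because $m^0(\A_{|\H'_k}(\cdot,\lambda))$ can be positive only when some $\alpha_{j,k}(z_i)=0$, and that in turn forces $i^0(\nabla^2 F(z_i))\neq0$, we conclude $\widetilde{i^0}(\nabla^2 F(z_i))=0$, as required.

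With all four hypotheses of Theorem \ref{lam1} in hand, that theorem produces a global bifurcation point $(0,\lambda_0)$ of solutions of $\A(u,\lambda)=0$, hence a connected component $C(\lambda_0)$ of the closure of $\mathcal{N}$ with $C(\lambda_0)\neq\{(0,\lambda_0)\}$. Finally I would invoke the argument recorded after Theorem \ref{lam3}: for each $(u,\lambda)\in C(\lambda_0)\setminus(\{0\}\times\Lambda)$ the equation $\A(u,\lambda)=0$ forces $\nabla\Phi(i(u,\lambda))\in\H^G$, so there is $f\in\H^G$ for which $i(u,\lambda)\in\H^K\setminus\H^G$ solves \eqref{1}. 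Thus $i(C(\lambda_0)\setminus(\{0\}\times\Lambda))$ is the desired connected set. The main obstacle is not analytic but structural: ensuring that $N(K)$ is connected and that the spherical-harmonic eigenspaces furnish the nontrivial $N(K)$-representations demanded in hypothesis (2). Once that is settled, the statement follows as a direct application of the machinery already developed.
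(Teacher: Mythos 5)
Your proof follows the paper's argument exactly: hypothesis (1) says $\varphi(1+\mu_k,z_i)\neq 0$ for every $\mu_k\in\sigma(-\Delta,B^n)$, which is precisely the condition $i^0(\nabla^2 F(z_i))=0$, hence $\widetilde{i^0}(\nabla^2 F(z_i))=0$, and Theorem \ref{lam1} together with the translation recorded after Theorem \ref{lam3} gives the connected set of solutions of system \eqref{1}. The one caveat --- which the paper's own two-line proof also leaves implicit --- is the connectedness of $N(K)$ demanded by hypothesis (4) of Theorem \ref{lam1}: this is not automatic for subgroups of the connected group $\SO(n)$ (e.g. $N(\Z_m)=\Or(2)$ in $\SO(3)$, as the paper's own remark notes), so it should be stated as a standing assumption on $K$ rather than derived from the ball-and-$\SO(n)$ setting.
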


\begin{proof}
The first assumption implies that $i^0(\nabla^2 F(z_1))=i^0(\nabla^2 F(z_2))=0$, hence $\widetilde{i^0}(\nabla^2 F(z_1))=\widetilde{i^0}(\nabla^2 F(z_2))=0$. Theorem \ref{lam1} completes the proof.
\end{proof}

Elements of the set $\sigma(-\Delta, B^n)$ are well known, especially if $n=2,3$, see \cite{Michlin}. So are representations of $\SO(n)$, for $n=2,3$, and the degree for $\SO(2)$-invariant functionals. Therefore the assumptions of theorem \ref{Example} are easy to verify.

\begin{Remark}
\begin{enumerate}

\item If $G=\SO(n)$ and $K=\{e\}$ (the trivial subgroup), then the normalizer of $K$ is equal to $\SO(n)$ and $\H^K=\H$. Therefore using our method we can study existing of non-radial solutions such that $\nabla\Phi(w)\in \H^{\SO(n)}$.
\item If $G=\SO(3)$ and $K=\Z_m$ (a cyclic group), then the Weyl group of $K$ is equal to $\operatorname{O}(2)$ and the space $\H^K$ is an $\operatorname{O}(2)$-representation and therefore an $\SO(2)$-representation. Using the degree for $\SO(2)$-equivariant maps we can study the problem: does there exist $w\in \H^{\Z_m}\backslash \H^{\SO(3)}$ such that $\nabla\Phi(w)\in \H^{\SO(3)}$?
    \item In \cite{Dancer} has been considered problem of breaking symmetries for elliptic equation using the homotopy index. The author has obtained a sequence of solution of the bifurcation problem. We emphasize that using the degree for gradient $G$-equivariant maps, we have obtained a connected set of solutions.
\item In this article we have considered a simple example of the method, but the same could be used  for more complicated equations.
\end{enumerate}
\end{Remark}

\section{Appendix}
To make this article self-contained we recall the definitions and properties of the Euler ring for a compact Lie group and the degree for $G$-invariant strongly indefinite functionals.

Denote by $\F(G)$ the class of pointed $G$-CW-complexes, see \cite{Dieck} for the definition, and by $[X]$ the $G$-homotopy class of a pointed $G$-CW-complex $X$. Let $\mathbf{F}$ be the free abelian group generated by the pointed $G$-homotopy types of finite $G$-CW-complexes and $\mathbf{N}$ the subgroup of $\mathbf{F}$ generated by all elements $[A]-[X]+[X/A]$ for pointed $G$-CW-subcomplexes $A$ of a pointed $G$-CW-complex $X$.

\begin{Definition}
Put $U(G)=\mathbf{F}/\mathbf{N}$ and let $\chi_G(X)$ be the class of $[X]$ in $U(G)$. The element $\chi_G(X)$ is said to be the $G$-equivariant Euler characteristic of a pointed $G$-CW-complex $X$.

For $X,Y\in\F(G)$ let $[X\vee Y]$ denote a $G$-homotopy type of the wedge $X\vee Y\in\F(G)$. Since $[X]-[X\vee Y]+[(X\vee Y)/X]=[X]-[X\vee Y]+[Y] \in \mathbf{N}$
\begin{equation}\label{plus}
\chi_G(X)+ \chi_G(Y)=\chi_G(X\vee Y).
\end{equation}

For $X,Y\in\F(G)$ let $X\wedge Y=X\times/X\vee Y$, The assignment $(X,Y)\mapsto X\wedge Y$ induces a product $U(G)\times U(G)\to U(G)$ given by
\begin{equation}\label{razy}
\chi_G(X)\star \chi_G(Y)=\chi_G(X\wedge Y).
\end{equation}
\end{Definition}

If $X$ is a $G$-CW-complex without a base point, then by $X^+$ we denote a pointed $G$-CW-complex $X\cup\{\star\}$ and consequently we put $\chi_G(X)=\chi_G(X^+)$.
\begin{Lemma}
$(U(G),+,\star)$ with an additive and multiplicative structures given by \eqref{plus}, \eqref{razy}, respectively, is a commutative ring with unit $\mathbb{I}=\chi_G(G/G^+)$.
\end{Lemma}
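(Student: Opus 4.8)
The additive part requires almost no work: $U(G)=\mathbf{F}/\mathbf{N}$ is by construction a quotient of a free abelian group by a subgroup, hence is itself an abelian group, with zero element $\chi_G(\star)$ represented by the one-point (basepoint-only) complex. Formula \eqref{plus} merely records that this inherited addition is realised geometrically by the wedge, since $X\vee\star$ is pointed $G$-homotopy equivalent to $X$. The whole content of the lemma therefore lies in the multiplicative structure, and my plan is to define $\star$ first on $\mathbf{F}$ by $[X]\star[Y]=[X\wedge Y]$ (extended $\Z$-bilinearly), then show it descends to $U(G)$ and satisfies the ring axioms.

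That $\star$ is well defined already on $\mathbf{F}$ follows because smashing with a fixed $Y$ preserves pointed $G$-homotopy equivalences: a pointed $G$-homotopy $h_t\colon X\to X'$ induces $h_t\wedge\Id_Y\colon X\wedge Y\to X'\wedge Y$, so the $G$-homotopy type of $X\wedge Y$ depends only on the types of $X$ and $Y$. One also verifies that the smash product of two finite $G$-CW-complexes is again a finite $G$-CW-complex: $X\times Y$ carries a product $G$-CW structure (here one uses that $G$ is a compact Lie group), and $X\wedge Y=(X\times Y)/(X\vee Y)$ inherits the structure of a finite pointed $G$-CW-complex.

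The step I expect to be the main obstacle is showing that $\star$ descends to the quotient, that is $\mathbf{N}\star\mathbf{F}\subseteq\mathbf{N}$. For a generator $[A]-[X]+[X/A]$ of $\mathbf{N}$, with $A$ a pointed $G$-CW-subcomplex of $X$, and any $Y\in\F(G)$, the point is that $A\wedge Y$ is a pointed $G$-CW-subcomplex of $X\wedge Y$ and that there is a natural $G$-homeomorphism $(X\wedge Y)/(A\wedge Y)\cong (X/A)\wedge Y$, because smashing with $Y$ commutes with collapsing a subcomplex. Hence
\[
([A]-[X]+[X/A])\star[Y]=[A\wedge Y]-[X\wedge Y]+[(X\wedge Y)/(A\wedge Y)]\in\mathbf{N},
\]
which together with commutativity on $\mathbf{F}$ (via the swap, proved next) also yields $\mathbf{F}\star\mathbf{N}\subseteq\mathbf{N}$. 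Thus $\star$ induces a well-defined product on $U(G)=\mathbf{F}/\mathbf{N}$, and \eqref{razy} holds.

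The remaining ring axioms follow from standard natural pointed $G$-homeomorphisms between finite $G$-CW-complexes: commutativity from the swap $X\wedge Y\cong Y\wedge X$, associativity from $(X\wedge Y)\wedge Z\cong X\wedge(Y\wedge Z)$, and distributivity over $+$ from $(X\vee Y)\wedge Z\cong (X\wedge Z)\vee(Y\wedge Z)$ combined with \eqref{plus}. Finally, since $(G/G)^+$ is the two-point complex $S^0$ and $X\wedge S^0\cong X$ for every pointed $G$-CW-complex $X$, the class $\mathbb{I}=\chi_G((G/G)^+)$ is a two-sided multiplicative unit, and the proof is complete.
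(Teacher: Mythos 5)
Your proposal is correct, but note that the paper itself offers no proof of this lemma: it is quoted as a known fact, with the reader referred to tom Dieck's books for the construction of the Euler ring. Your argument is essentially the standard one found there: define the smash product bilinearly on $\mathbf{F}$, check it preserves pointed $G$-homotopy types and finite $G$-CW structures, verify that it descends to $U(G)=\mathbf{F}/\mathbf{N}$ via the $G$-homeomorphism $(X\wedge Y)/(A\wedge Y)\cong (X/A)\wedge Y$, and read off the ring axioms from the natural homeomorphisms for swap, associativity, distributivity over the wedge, and $X\wedge S^0\cong X$. The only step you should not wave at too quickly is the assertion that $X\times Y$ with the diagonal action carries a $G$-CW structure: for a compact Lie group $G$ this requires decomposing the orbit spaces $G/H\times G/K$ into cells and is a genuine (though classical) theorem rather than a formality; you correctly flag that compactness of $G$ enters exactly here. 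With that reference supplied, your proof is complete and is precisely the argument the paper delegates to the literature.
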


We call $(U(G),+,\star)$ the Euler ring of $G$.

Denote by $\sub[G]$ the set of conjugacy classes of subgroups of a group $G$.

\begin{Lemma}
$(U(G),+)$ is a free abelian group with basis $\chi_G(G/K^+),\ (K)\in\sub[G]$.
\end{Lemma}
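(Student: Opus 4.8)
The plan is to prove the final statement, which is the claim that $(U(G),+)$ is a free abelian group with basis $\{\chi_G(G/K^+) : (K)\in\sub[G]\}$. The strategy is to exploit the fact that every finite $G$-CW-complex is built inductively by attaching equivariant cells, each of which is of the form $G/K \times D^m$ for some $(K)\in\sub[G]$, and that the relation $[A]-[X]+[X/A]$ defining $\mathbf{N}$ is precisely the relation that records the collapse of a subcomplex. First I would show that the classes $\chi_G(G/K^+)$ generate $U(G)$: given any finite pointed $G$-CW-complex $X$, I would induct on the number of equivariant cells, using the cofibration relation \eqref{plus} together with the observation that collapsing a subcomplex $A\subset X$ gives $\chi_G(X)=\chi_G(A)+\chi_G(X/A)$, so that peeling off top-dimensional cells one at a time expresses $\chi_G(X)$ as an integer combination of the orbit-cell classes $\chi_G((G/K\times D^m)/(G/K\times S^{m-1}))$. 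Since $(G/K\times D^m)/(G/K\times S^{m-1})$ is $G$-homotopy equivalent to the $m$-fold suspension of $G/K^+$, and suspension only introduces signs $(-1)^m$ in the Euler characteristic (via the relation applied to the reduced cone), each such class equals $\pm\chi_G(G/K^+)$; hence the proposed set generates.

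The harder half is linear independence of the generators, and this is where I expect the main obstacle to lie. The plan is to construct, for each conjugacy class $(H)\in\sub[G]$, a ring or group homomorphism $U(G)\to\Z$ that detects $\chi_G(G/H^+)$ and annihilates the ``larger'' orbit types, thereby producing a triangular system with respect to a suitable partial order on $\sub[G]$. The natural candidate is the family of fixed-point Euler characteristics: for a subgroup $H$, sending $\chi_G(X)$ to the ordinary (integral, reduced) Euler characteristic $\chi(X^H)$ of the $H$-fixed-point subcomplex. I would need to check that $X\mapsto\chi(X^H)$ is well defined on $U(G)$, i.e.\ that it respects the relation $[A]-[X]+[X/A]$; this follows because $(\,\cdot\,)^H$ is exact on CW-pairs, sending the cofibre sequence $A\to X\to X/A$ to $A^H\to X^H\to (X/A)^H$, and ordinary Euler characteristic is additive over such sequences. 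The key computation is then $\chi\big((G/K^+)^H\big)=\chi\big((G/K)^H\big)$, the Euler characteristic of the fixed-point set $(G/K)^H$, which is nonzero exactly when $(H)$ is subconjugate to $(K)$ and is handled by the standard formula for Euler characteristics of homogeneous spaces.

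To finish I would order $\sub[G]$ so that $(K)\preceq(K')$ whenever $K$ is subconjugate to $K'$, and observe that the matrix with entries $\chi\big((G/K)^H\big)$ is triangular with nonzero diagonal entries along this order: evaluating the ``Burnside-type'' homomorphisms $\big(\chi((\,\cdot\,)^H)\big)_{(H)}$ on a hypothetical nontrivial relation $\sum_{(K)} n_K\,\chi_G(G/K^+)=0$ and reading off the equations from the maximal subgroups downward forces every coefficient $n_K$ to vanish. This triangularity is exactly the mechanism already used implicitly in the normalizer-dimension arguments of Lemma~\ref{connected} (which invoked theorem~3.2 of \cite{Garza} classifying elements of $U(T)$ by orbit-type strata), so I would point the reader to \cite{Dieck} for the detailed verification that these fixed-point characters separate the generators. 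The only subtlety I anticipate is ensuring that the Euler characteristics of the fixed-point sets $(G/K)^H$ are computed correctly when $G$ is a positive-dimensional Lie group rather than finite, since then $(G/K)^H$ may be a positive-dimensional manifold whose Euler characteristic can vanish; this is precisely why one must work with conjugacy classes and the subconjugacy order rather than expecting a strictly diagonal matrix, but triangularity with nonzero diagonal still holds and suffices for the freeness claim.
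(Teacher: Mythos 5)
The paper itself does not prove this lemma --- it is quoted as background and attributed to tom Dieck \cite{Dieck1,Dieck} --- so your proposal has to be measured against the standard argument there. Your first half (generation) is correct and is essentially that argument: induct over equivariant cells, use the cofibre relation defining $\mathbf{N}$, and observe that $\chi_G\bigl((G/K\times D^m)/(G/K\times S^{m-1})\bigr)=(-1)^m\chi_G(G/K^+)$.

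The linear-independence half has a genuine gap. The fixed-point characters $\chi_G(X)\mapsto\chi(X^H)$ are well defined on $U(G)$ and the matrix $\bigl(\chi((G/K)^H)\bigr)$ is triangular for the subconjugacy order, but your claim that the diagonal entries are nonzero is false for positive-dimensional $G$: the diagonal entry at $(K)$ is $\chi\bigl((G/K)^K\bigr)=\chi\bigl(N(K)/K\bigr)$, which vanishes whenever the Weyl group $N(K)/K$ has positive dimension. Concretely, take $G=\SO(2)$ and $K=\{e\}$: then $\chi\bigl((G/K)^K\bigr)=\chi(S^1)=0$, while $(G/K)^H=\emptyset$ for every nontrivial $H$, so \emph{every} fixed-point character annihilates the basis element $\chi_G(G/\{e\}^+)$ and no family of such characters can detect it. (This is precisely where the Euler ring $U(G)$ differs from the Burnside ring $A(G)$, whose basis consists only of the orbit types with finite Weyl group --- exactly the ones the fixed-point characters do separate.) The repair, which is tom Dieck's actual argument, is to use for each $(H)\in\sub[G]$ the homomorphism $U(G)\to\Z$ given by the alternating sum $\sum_m(-1)^m\nu(X,(H),m)$ of the numbers of equivariant $m$-cells of orbit type exactly $(H)$; one shows this equals the Euler characteristic of the pair of orbit spaces of the strata $\{x:(G_x)\supseteq(H)\}$ and $\{x:(G_x)\supsetneq(H)\}$, hence is a $G$-homotopy invariant compatible with the relation $[A]-[X]+[X/A]$, and it sends $\chi_G(G/K^+)$ to $\delta_{(H)(K)}$. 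With that substitution the proof closes; as written, the independence step would fail already for $G=\SO(2)$, which is one of the groups the paper actually uses.
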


See \cite{Dieck1, Dieck} for the complete definition and more properties of the Euler ring.

An element of the Euler ring is the degree for  $G$-invariant strongly indefinite functionals, we recall the definition and properties.
Let $(\H,\langle\cdot,\cdot\rangle)$ be an infinite-dimensional, separable Hilbert space which is an orthogonal $G$-representation. Denote by $\Gamma=\{\tau_n\colon\H\to\H:n\in\N \cup \{0\}\}$  a sequence of $G$-equivariant orthogonal projections.
\begin{Definition}
A set $\Gamma$ is said to be a $G$-equivariant approximation scheme on $\H$ if
\begin{enumerate}
\item for every $n\in\N \cup \{0\}$, $\H^n$ is a finite subrepresentation of the representation $\H$,
\item $\H^{n+1}=\H^n\oplus\H_{n+1}$ and $\H^n\bot\H_{n+1}$,
\item for every $u\in\H \lim\limits_{n\rightarrow\infty}\tau_n(u)=u$.\\
\end{enumerate}
\end{Definition}
Assume that
\begin{description}
\item[(a1)] $\Omega\subset\H$ is an open, bounded and $G$-invariant subset,
\item[(a2)] $L\colon\H\to\H$ is a linear, bounded, self-adjoint, $G$-equivariant Fredholm operator satisfying the following assumptions:
\begin{enumerate}[(a)]
\item $\ker L=\H^0$,
\item $\pi_n\circ L=L\circ\pi_n$, for all $n\in\N \cup \{0\}$,
\end{enumerate}
\item[(a3)] $\nabla\eta\colon\Omega\to \H$ is a continuous, $G$-equivariant, compact operator,
\item[(a4)] $\Phi\in C^1_G(\Omega,\R)$ satisfies the following assumptions:
\begin{enumerate}[(a)]
\item $\nabla\Phi(u)=Lu-\nabla\eta(u)$,
\item $\operatorname{cl}((\nabla\Phi)^{-1}(0))\cap\partial\Omega=\emptyset$.
\end{enumerate}
\end{description}

Under the above assumptions define the degree for $G$-invariant strongly indefinite functionals by
\begin{equation}\label{formulaofdegree}
\nabla_G\text{-}\deg(L-\nabla \eta, \Omega)=
(\nabla_G\text{-}\deg(L, B(\H^n\ominus\H^0)))^{-1}\star
\nabla_G\text{-}\deg(L-\pi_n\nabla\eta, \Omega_{\epsilon}\cap\H^n),
\end{equation}
where $\epsilon>0$ is sufficiently small and $n\in \N$ is sufficiently large, see \cite{degree} for details.

\begin{Theorem}
The degree has the following properties:
\begin{enumerate}
\item
\begin{enumerate}[(a)]
\item if $\nabla_G\text{-}\deg(\nabla\Phi, \Omega)\neq\Theta\in U(G)$, then $(\nabla\Phi)^{-1}(0)\cap\Omega\neq\emptyset$,
\item if $\Omega=\Omega_1\cup\Omega_2$ and $\Omega_1,\ \Omega_2$ are open, disjoint  and $G$-invariant sets, then
\[\nabla_G\text{-}\deg(\nabla\Phi, \Omega)=\nabla_G\text{-}\deg(\nabla\Phi, \Omega_1)+\nabla_G\text{-}\deg(\nabla\Phi, \Omega_2),\]
\item if $\Omega_1\subset\Omega$ is an open and $G$-invariant set and  $(\nabla\Phi)^{-1}(0)\cap\Omega\subset\Omega_1$, then \[\nabla_G\text{-}\deg(\nabla\Phi, \Omega)=\nabla_G\text{-}\deg(\nabla\Phi, \Omega_1),\]
\item if $0\in\Omega$ and $\Phi\in C^2_G(\Omega,\R)$ is such that $\nabla\Phi(0)=0$ and $\nabla^2\Phi(0)\colon\H\to\H$ is a $G$-equivariant self-adjoint isomorphism then there is $\gamma_0>0$ such that for every $\gamma<\gamma_0$ we have
    \[\nabla_G\text{-}\deg(\nabla\Phi, B_{\gamma}(\H))=\nabla_G\text{-}\deg(\nabla\Phi^2(0), B(\H)).\]
\end{enumerate}
\item Fix $\Phi\in C^1_G(\H\times[0,1],\R)$ such that $(\nabla_u\Phi)^{-1}(0)\cap(\partial\Omega\times[0,1])=\emptyset$ and
$\nabla_u\Phi(u,t)=Lu-\nabla_u\eta(u,t)$, where $\nabla_u\eta\colon\Omega\times[0,1]\to\H$ is $G$-equivariant and compact. Then
\[\nabla_G\text{-}\deg(\nabla_u\Phi(\cdot,0), \Omega)=\nabla_G\text{-}\deg(\nabla_u\Phi(\cdot,1), \Omega).\]
\item Let $\Omega_1\subset\H_1,\ \Omega_2\subset\H_2$ be open, bounded and $G$-invariant subsets of $G$-representations $\H_1,\ \H_2$. Assume that the functionals $\Phi_i\in C^1_G(\H_i,\R),\ i=1,2$ are of the form
    $\Phi_i(u)=\frac{1}{2}\langle L_i u, u\rangle +\eta_i(u)$ and satisfy the assumptions (a1)-(a4). Define a functional $\Phi\in C^1_G(\H_1\oplus\H_2,\R)$ by $\Phi(u_1,u_2)=\Phi(u_1)+\Phi(u_2)$ and set $\Omega=\Omega_1\times\Omega_2$. Then
    \[\nabla_G\text{-}\deg(\nabla\Phi, \Omega)=\nabla_G\text{-}\deg(\nabla\Phi_1, \Omega_1)\star\nabla_G\text{-}\deg(\nabla\Phi_2, \Omega_2).\]
\end{enumerate}
\end{Theorem}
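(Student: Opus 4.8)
The plan is to derive all six properties from the defining formula \eqref{formulaofdegree} by transporting the already-established properties of the finite-dimensional gradient $G$-equivariant degree $\nabla_G\text{-}\deg(\cdot,\cdot)$ on the finite subrepresentations $\H^n$, see \cite{Geba, degree}. Two structural observations drive the reduction: $U(G)$ is a commutative ring, and the linear factor $\nabla_G\text{-}\deg(L, B(\H^n\ominus\H^0))$ in \eqref{formulaofdegree} is invertible in $U(G)$. Consequently any identity holding between the finite-dimensional degrees of the truncated maps $L-\pi_n\nabla\eta$ is carried over to the strongly indefinite degree by multiplying through by the fixed unit $(\nabla_G\text{-}\deg(L, B(\H^n\ominus\H^0)))^{-1}$.

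Properties 1(b), 1(c) and 1(d) need only this transport mechanism. For additivity 1(b), disjointness of $\Omega_1,\Omega_2$ gives, for $n$ large, the finite-dimensional additivity $\nabla_G\text{-}\deg(L-\pi_n\nabla\eta,\Omega_\epsilon\cap\H^n)=\nabla_G\text{-}\deg(L-\pi_n\nabla\eta,(\Omega_1)_\epsilon\cap\H^n)+\nabla_G\text{-}\deg(L-\pi_n\nabla\eta,(\Omega_2)_\epsilon\cap\H^n)$; multiplying by the unit and using that $\star$ distributes over $+$ in $U(G)$ yields the claim. Excision 1(c) is the same argument once $n$ is large enough that the compact solution set sits inside $(\Omega_1)_\epsilon\cap\H^n$. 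For the linearization 1(d) I would connect $\nabla\Phi$ to $\nabla^2\Phi(0)$ on $B_\gamma(\H)$ by the gradient homotopy $H(u,t)=Lu-\bigl((1-t)\nabla\eta(u)+t\,D(\nabla\eta)(0)u\bigr)$, whose nonlinear part is compact; since $\nabla^2\Phi(0)$ is a self-adjoint isomorphism and $\nabla\eta(u)=D(\nabla\eta)(0)u+o(\|u\|)$, for $\gamma$ small $H(\cdot,t)$ has no zeros on $\partial B_\gamma(\H)$, so property 2 gives $\nabla_G\text{-}\deg(\nabla\Phi,B_\gamma(\H))=\nabla_G\text{-}\deg(\nabla^2\Phi(0),B_\gamma(\H))$, and the degree of a linear isomorphism is radius-independent.

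The genuinely infinite-dimensional statements are the existence property 1(a) and the homotopy invariance 2, and here lies the main obstacle. For 1(a), invertibility of the linear factor forces $\nabla_G\text{-}\deg(\nabla\Phi,\Omega)\neq\Theta$ to imply $\nabla_G\text{-}\deg(L-\pi_n\nabla\eta,\Omega_\epsilon\cap\H^n)\neq\Theta$, whence the finite-dimensional existence property yields $u_n\in\Omega_\epsilon\cap\H^n$ with $Lu_n=\pi_n\nabla\eta(u_n)$; compactness of $\nabla\eta$ together with $\pi_n\to\Id$ strongly and the Fredholm structure of $L$ then lets me pass to a subsequential limit $u_*\in\cl(\Omega)$ solving $Lu_*=\nabla\eta(u_*)$, and assumption (a4)(b) rules out $u_*\in\partial\Omega$. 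For homotopy invariance 2 the crux is an approximation lemma: using compactness of $\nabla_u\eta$ on $\cl(\Omega)\times[0,1]$ and the admissibility hypothesis $(\nabla_u\Phi)^{-1}(0)\cap(\partial\Omega\times[0,1])=\emptyset$, one shows that for all large $n$ the truncated homotopy $(u,t)\mapsto Lu-\pi_n\nabla_u\eta(u,t)$ is free of zeros on $\partial(\Omega_\epsilon\cap\H^n)$ uniformly in $t$; finite-dimensional homotopy invariance then equates the two finite-dimensional degrees, and multiplication by the fixed unit finishes. Establishing this uniform boundary-admissibility for large $n$ — a compactness argument that genuinely uses that the nonlinear term is completely continuous — is the step I expect to require the most care.

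Finally, for the product formula 3 I would choose the approximation scheme on $\H_1\oplus\H_2$ to be the direct sum of the given schemes, so that $(\H_1\oplus\H_2)^n=\H_1^n\oplus\H_2^n$, $L=L_1\oplus L_2$ and $\nabla\eta=\nabla\eta_1\oplus\nabla\eta_2$ split compatibly (this uses $\Phi(u_1,u_2)=\Phi_1(u_1)+\Phi_2(u_2)$). The finite-dimensional product formula then factors both the truncated degree over $(\Omega_1\times\Omega_2)_\epsilon\cap(\H_1\oplus\H_2)^n$ and the linear factor into the corresponding $\star$-products over the two factors. Since $U(G)$ is commutative, $(x_1 x_2)^{-1}=x_1^{-1}\star x_2^{-1}$, so regrouping the four resulting factors according to \eqref{formulaofdegree} reproduces $\nabla_G\text{-}\deg(\nabla\Phi_1,\Omega_1)\star\nabla_G\text{-}\deg(\nabla\Phi_2,\Omega_2)$; the only point needing verification is that for small $\epsilon$ the $\epsilon$-enlargement of the product set is compatible with the product of the enlargements, which is immediate from the definitions.
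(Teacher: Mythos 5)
Your outline is essentially sound, but be aware that the paper itself contains no proof of this theorem: it is stated in the Appendix only to make the article self-contained, and the reader is referred to \cite{degree} (Go{\l}\c{e}biewska--Rybicki), where the degree is constructed and all of these properties are established. So what your argument reconstructs is not anything in the paper but the strategy of that cited reference --- and it is indeed the same strategy: every property is transported from G\c{e}ba's finite-dimensional gradient equivariant degree through the defining formula \eqref{formulaofdegree}, using that $U(G)$ is a commutative ring and that the linear factor $\nabla_G\text{-}\deg(L, B(\H^n\ominus\H^0))$ is a unit. You also locate the genuinely infinite-dimensional work correctly: the uniform boundary admissibility of the truncated maps and homotopies for large $n$ (a compactness argument using complete continuity of $\nabla\eta$ and the strong convergence $\pi_n\to\Id$), and the subsequential limit argument behind the existence property 1(a), where the Fredholm splitting $\H=\ker L\oplus(\ker L)^{\bot}$ and assumption (a4)(b) are used exactly as you say. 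One point you lean on without stating it: the well-definedness of \eqref{formulaofdegree} --- its independence of $n$, of $\epsilon$, and of the choice of approximation scheme --- which is itself a nontrivial part of \cite{degree}; you invoke it implicitly in the product formula 3 when you replace whatever schemes are given on $\H_1$ and $\H_2$ by their direct sum, and in 1(b), 1(c) when you identify $\epsilon$-shrinkings of unions and subsets with unions and subsets of $\epsilon$-shrinkings (strictly, these identifications go through finite-dimensional excision). In short: your write-up buys a self-contained appendix at the cost of deferring precisely the technical lemmas (admissibility and well-definedness) that the paper's citation outsources in full detail to \cite{degree}.
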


\begin{Theorem}\label{GLOB}
Fix $\Phi\in C^2_G(\H\times\Lambda,\R)$ such that $\nabla_u\Phi(u,\lambda)=Lu-\nabla_u\eta(u,\lambda)$, where the mapping $\nabla_u\eta\colon\Omega\times\Lambda\to\H$ is $G$-equivariant and compact. Suppose that $\nabla_u\Phi(0,\lambda)=0$ for every $\lambda\in\Lambda$. If there exist $\gamma_1,\ \gamma_2>0$ such that
\[\nabla_G\text{-}\deg(\nabla_u\Phi(\cdot,\lambda_1), B_{\gamma_1}(\H))\neq\nabla_G\text{-}\deg(\nabla_u\Phi(\cdot,\lambda_2), B_{\gamma}(\H)),\]
then at every path joining $(0,\lambda_1)$ and $(0,\lambda_2)$ exists a global bifurcation point of solutions of the equation $\nabla_u\Phi(u,\lambda)=0$.
\end{Theorem}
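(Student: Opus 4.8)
The plan is to argue by contradiction, in the spirit of Rabinowitz's global bifurcation theorem transported to the equivariant degree $\nabla_G\text{-}\deg$ in the Euler ring $U(G)$. Suppose the conclusion fails: there is a path $c\colon[0,1]\to\{0\}\times\Lambda$ with $c(0)=(0,\lambda_1)$ and $c(1)=(0,\lambda_2)$, none of whose points is a global bifurcation point. Write $\mathcal{N}=\{(u,\lambda)\in(\H\setminus\{0\})\times\Lambda:\nabla_u\Phi(u,\lambda)=0\}$ and $S=\cl(\mathcal{N})$. The structural fact I will exploit is that, since $L$ is Fredholm of index $0$ and $\nabla_u\eta$ is compact, the field $\nabla_u\Phi=L-\nabla_u\eta$ is proper on bounded closed subsets of $\H\times\Lambda$; hence the intersection of $S$ with any bounded set is compact, so $S$ is locally compact and its connected components are well controlled.

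For each $t$ put $(0,\mu_t)=c(t)$. Because $(0,\mu_t)$ is not a global bifurcation point, one of two things happens. If $(0,\mu_t)\notin S$, then $(0,\mu_t)$ is not even a local bifurcation point and there is a product neighbourhood of it carrying only the trivial solution. If $(0,\mu_t)\in S$, then the component $C(\mu_t)$ of $S$ through $(0,\mu_t)$ is bounded and satisfies $C(\mu_t)\cap(\{0\}\times(\Lambda\setminus\{\mu_t\}))=\emptyset$. I would then invoke Whyburn's separation lemma: a bounded component of the locally compact set $S$ that is disjoint from a closed set can be split off by a relatively clopen piece, which in turn can be surrounded by a bounded, open, $G$-invariant set in $\H\times\Lambda$ whose boundary meets no solution. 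Using compactness of $c([0,1])$ I would patch these local constructions into a single bounded $G$-invariant open $\mathcal{O}$ containing $c([0,1])$ together with all components attached to it, with $\partial\mathcal{O}\cap S=\emptyset$, meeting the trivial branch in a neighbourhood of the whole path.

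With such an $\mathcal{O}$ the proof reduces to a degree computation. For $\lambda$ in the projection of $\mathcal{O}$ to $\Lambda$ set $\mathcal{O}_\lambda=\{u\in\H:(u,\lambda)\in\mathcal{O}\}$; since $\partial\mathcal{O}$ carries no solutions, $\nabla_u\Phi(\cdot,\lambda)$ has no zeros on $\partial\mathcal{O}_\lambda$, so $\nabla_G\text{-}\deg(\nabla_u\Phi(\cdot,\lambda),\mathcal{O}_\lambda)$ is defined, and the homotopy invariance of the degree (property (2) of the degree theorem), applied along $t\mapsto\nabla_u\Phi(\cdot,\mu_t)$, makes it independent of $\lambda$ along $c$. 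At the endpoints, where $0$ is an isolated zero inside $B_{\gamma_i}(\H)$, the excision property (property (1)(c)) lets me replace $\mathcal{O}_{\lambda_i}$ by $B_{\gamma_i}(\H)$ and recover $\nabla_G\text{-}\deg(\nabla_u\Phi(\cdot,\lambda_i),B_{\gamma_i}(\H))$. Chaining the equalities along $c$ forces $\nabla_G\text{-}\deg(\nabla_u\Phi(\cdot,\lambda_1),B_{\gamma_1}(\H))=\nabla_G\text{-}\deg(\nabla_u\Phi(\cdot,\lambda_2),B_{\gamma_2}(\H))$, contradicting the hypothesis.

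The hard part is the second and third paragraphs together: producing $\mathcal{O}$ in the infinite-dimensional setting while keeping it $G$-invariant with solution-free boundary, and in particular controlling the far-field parts of the attached components over the endpoint parameters, so that the excision step at $\lambda_1,\lambda_2$ genuinely returns the prescribed local indices rather than the index plus a spurious contribution from a branch that loops back to the same parameter at large $\|u\|$. This is precisely where the Rabinowitz dichotomy enters: should the union of the path with its attached components fail to be bounded, one instead extracts an unbounded component, which is itself a global bifurcation point and again contradicts the choice of $c$; boundedness of that union, hence its compactness via the properness of $L-\nabla_u\eta$, is what legitimises Whyburn's lemma. The remaining degree bookkeeping is then routine given the properties recalled in the Appendix, and the full technical details are carried out in \cite{degree}.
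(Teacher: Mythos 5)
The paper does not prove this statement: Theorem \ref{GLOB} is recalled verbatim in the Appendix from the Go{\l}\c{e}biewska--Rybicki paper \cite{degree}, and the reader is simply referred there. So there is no in-paper proof to compare against; your Rabinowitz-type contradiction scheme (isolate the path together with its attached solution components by a bounded $G$-invariant open set with solution-free boundary, then run homotopy invariance and excision of $\nabla_G\text{-}\deg$ along the path) is indeed the strategy used in the cited source. That said, two points in your write-up are genuine gaps rather than routine details.

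First, your structural premise that $S=\cl(\mathcal{N})$ is locally compact because $L-\nabla_u\eta$ is proper on bounded closed sets is false in the setting of this paper: properness of $u\mapsto Lu-\nabla_u\eta(u,\lambda)$ controls only the $u$-variable, while $\Lambda=\H^G$ is in general an infinite-dimensional Hilbert space, so a bounded closed piece of $S$ has a non-precompact $\lambda$-shadow and Whyburn's separation lemma does not apply to it. The missing idea is to first restrict the problem to the compact path, i.e.\ to the one-real-parameter family $t\mapsto\nabla_u\Phi(\cdot,c(t))$ on $\H\times[0,1]$, where bounded closed subsets of the solution set \emph{are} compact; a component of the restricted solution set that is unbounded or returns to the trivial line is contained in a component of the full solution set with the same property, so the conclusion transfers. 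Second, the endpoint step is not actually discharged by what you call the Rabinowitz dichotomy: that dichotomy only decides whether Whyburn's lemma is applicable, whereas the real issue is that $\mathcal{O}_{\lambda_i}$ may contain nontrivial zeros (points of attached components lying over the endpoint parameters away from $u=0$), so excision does not by itself return $\nabla_G\text{-}\deg(\nabla_u\Phi(\cdot,\lambda_i),B_{\gamma_i}(\H))$; one must either arrange $\mathcal{O}_{\lambda_i}=B_{\gamma_i}(\H)$ by construction or account for the extra contribution by an additivity argument. You should also note that the homotopy invariance listed in the Appendix is stated for a fixed domain, while your slices $\mathcal{O}_{\mu_t}$ vary with $t$, so the generalized homotopy invariance is needed. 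With these repairs the argument closes, but as written the proposal defers precisely the steps that constitute the proof.
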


See \cite{degree} for properties of the degree and \cite{Geba, Rybicki} for the definition of the degree for gradient $G$-equivariant maps. For the general theory of the equivariant degree we refer the reader to \cite{BKS}, \cite{BKR}.

\end{document}